\documentclass[11pt]{article}
\usepackage[a4paper, margin=2.5cm]{geometry}
\usepackage{graphicx} 
\usepackage{amsfonts,amssymb,amsthm,amsmath}
\usepackage[dvipsnames]{xcolor}
\usepackage{multirow}
\usepackage{caption}
\usepackage{subcaption}  
\usepackage{float}    
\usepackage{tikz}
\usepackage{url}
\usetikzlibrary{arrows.meta, positioning, shapes, calc}
\usepackage{mwe}
\usepackage{amsthm}
\usepackage[linesnumbered,ruled,vlined]{algorithm2e}
\SetKwInput{Input}{Input}
\SetKwInput{Output}{Output}

\newcommand{\SG}[1]{\textcolor{red}{#1}}
\newcommand{\AR}[1]{\textcolor{Emerald}{#1}}

\newcommand{\sA}{\textcolor{RedViolet}{S_\text{A}}}
\newcommand{\sP}{\textcolor{MidnightBlue}{S_\text{P}}}
\newcommand{\scat}{\textcolor{PineGreen}{S_\text{cat}}}
\newcommand{\scatA}{\textcolor{Sepia}{S_\text{catA}}}
\newcommand{\scatI}{\textcolor{Mahogany}{S_\text{catI}}}
\newcommand{\scatAI}{\textcolor{DarkOrchid}{S_\text{catAI}}}

\newcommand{\xA}{\textcolor{RedViolet}{x_\text{A}}}
\newcommand{\xP}{\textcolor{MidnightBlue}{x_\text{P}}}
\newcommand{\xcat}{\textcolor{PineGreen}{x_\text{cat}}}
\newcommand{\xcatA}{\textcolor{Sepia}{x_\text{catA}}}
\newcommand{\xcatI}{\textcolor{Mahogany}{x_\text{catI}}}
\newcommand{\xcatAI}{\textcolor{DarkOrchid}{x_\text{catAI}}}

\makeatletter
\newcommand*\rel@kern[1]{\kern#1\dimexpr\macc@kerna}
\newcommand*\widebar[1]{%
  \begingroup
  \def\mathaccent##1##2{%
    \rel@kern{0.8}%
    \overline{\rel@kern{-0.8}\macc@nucleus\rel@kern{0.2}}%
    \rel@kern{-0.2}%
  }%
  \macc@depth\@ne
  \let\math@bgroup\@empty \let\math@egroup\macc@set@skewchar
  \mathsurround\z@ \frozen@everymath{\mathgroup\macc@group\relax}%
  \macc@set@skewchar\relax
  \let\mathaccentV\macc@nested@a
  \macc@nested@a\relax111{#1}%
  \endgroup
}
\makeatother

\title{Data-driven discovery of chemical reaction networks}
\author{Abraham Reyes-Velazquez\thanks{Corresponding author. Department of Mathematics, The University of Manchester, Oxford Road, Manchester, M13\,9PL, United Kingdom, \texttt{abraham.reyesvelazquez@manchester.ac.uk}} \and Stefan G\"{u}ttel \and Igor Larrosa \and Jonas Latz}
\date{February 2026}

\newtheorem{theorem}{Theorem} 
\newtheorem{corollary}[theorem]{Corollary}

\begin{document}

\maketitle


\begin{abstract}
     We propose a unified framework that allows for the full mechanistic reconstruction of chemical reaction networks (CRNs) from concentration data. The framework utilizes an integral formulation of the differential equations governing the chemical reactions, followed by an automatic procedure to recover admissible mass-action mechanisms from the equations. We provide theoretical justification for the use of integral formulations using analytical and numerical error bounds. The integral formulation is demonstrated to offer superior robustness to noise and improved accuracy in both rate-law and graph recovery when compared to other commonly used formulations. Together, our  developments advance the goal of fully automated, data-driven chemical mechanism discovery.
\end{abstract}


\bigskip

\section{Introduction}
The elucidation of chemical mechanisms from data is a central challenge in chemical reaction network (CRN) theory. Traditionally, chemists have relied on empirical kinetic analysis to infer rate laws and the ordinary differential equations (ODEs) describing the time evolution of chemical species. The linking of the inferred rate laws to the underlying stoichiometric reaction equations typically involves heuristics and  expert reasoning. Among the classical approaches to model identification, some of the most influential include the method of initial rates \cite{Espenson1995}, the delplot technique \cite{bhore1990delplot}, reaction progress kinetic analysis (RPKA) \cite{blackmond2005reaction}, and variable time normalization analysis (VTNA) \cite{bures2016variable}. Despite their utility, these methods require prior mechanistic assumptions, substantial expert interpretation, and carefully designed experiments. Moreover, they typically yield only empirical rate laws, leaving the reconstruction of the underlying reaction network as a largely heuristic process.

Recent advances in machine learning and numerical linear algebra have led to a variety of computational methods for identifying chemical dynamical systems from time-series concentration data. For example, \cite{bures2023organic} employs deep neural networks trained to classify mechanisms from kinetic data; however, this approach can only discover  CRNs seen during training. Works such as \cite{burnham2007identifying, searson2007inference} aim to automatically infer ODEs of chemical reaction networks directly from experimental data using regression-based optimization. The paper \cite{willis2016inference} proposes a semi-automatic method for CRN identification by formulating the inference problem as a mixed-integer linear programming (MILP) optimization. This approach requires a predefined set of candidate reactions and uses MILP to select the minimal subset that best explains the observed concentration time series, estimating both network structure and rate constants simultaneously. While powerful in systematically exploring combinatorial reaction spaces and enforcing stoichiometric and kinetic constraints, its effectiveness depends on the quality of candidate reactions and can be computationally expensive for large or complex networks.
Although most of the focus on both modelling and inverse problems has been directed towards the continuous ODE formulation of chemical kinetics, there have also been advancements in the modelling and recovery on the discrete setting. In particular, the paper \cite{zhang2019learning} utilises stochastic population data, coming from a Monte Carlo simulation such as Gillespie's Stochastic Simulation Algorithm (SSA), and realises the recovery in this setting.

The sparse identification of nonlinear dynamics (SINDy) framework \cite{brunton2016discovering} has emerged as a particularly versatile tool due to its ability to infer parsimonious ODE models for nonlinear systems. Several adaptations of SINDy for chemical kinetics \cite{mangan2016inferring, hoffmann2019reactive, bhatt2023sindy} have demonstrated promise for data-driven discovery of rate laws. Obtaining an ODE system using SINDy is generally less computationally expensive than MILP or other regression-based approaches. However, a major limitation of SINDy, is that it identifies governing ODEs without establishing a systematic correspondence to the underlying chemical mechanism. Consequently, automatic elucidation of CRNs from experimental or simulated data remains an open challenge.
Two major difficulties hinder the  automation of CRN discovery. First, the problem of dynamical equivalence, where distinct CRNs produce identical ODE systems under mass-action kinetics~\cite{szederkenyi2010computing, szederkenyi2011inference}, complicates the recovery of unique reaction structures. Second, conservation laws inherent to many CRNs introduce numerical challenges, such as matrix rank deficiencies in the underlying linear algebra problems. These problems can often be mitigated by fusing data from multiple experiments or initial conditions, but to date there is no analysis to justify this theoretically. 

The use of integral formulations can significantly improve the recovery of CRNs from (noisy) data. Recently, several authors have explored integral formulations or hybrid approaches combining integral constraints with differential forms within the SINDy framework to improve robustness to noise and mitigate numerical differentiation errors. For example, the work \cite{forootani2023robust} augments the classical SINDy objective function with a Runge–Kutta consistency penalty (an integral-like constraint) while learning an implicit neural representation, although this work does not provide formal error bounds or compare purely integral versus purely differential formulations. Similarly, \cite{schaeffer2017sparse} directly formulates a sparse regression problem using integral terms rather than derivatives, demonstrating improved noise resilience but without detailed error analysis. A more closely related study is \cite{wei2022sparse}, which proposes an integral SINDy (ISINDy) strategy combining penalised spline smoothing with discretised integral regression; however, it also lacks formal error analysis.

Addressing these challenges requires integrating data-driven system identification with chemical network theory in a principled manner. This will be the subject of this work. We present a unified framework for data-driven discovery of chemical reaction networks that extends the SINDy methodology beyond ODE identification to full mechanistic reconstruction. Specifically, our paper contributes the following.
\begin{itemize}
    \item Automated linkage between inferred ODEs and CRNs: We introduce an algorithmic post-processing step that maps a sparse ODE model obtained by SINDy to an admissible chemical mechanism consistent with mass-action kinetics via a convex optimisation problem. For closed (reversible or weakly reversible) networks, this procedure is fully automated; for open systems, minimal user input is required to select among alternative filtration schemes prior to the same automated reconstruction.
    
    \item Differential versus integral SINDy formulations: We formulate and compare differential and integral variants of the SINDy regression problem, where concentration data are respectively differentiated or integrated after spline interpolation. Through numerical experiments, we demonstrate that the integral formulation offers superior robustness to noise and yields more accurate recovery of both rate laws and reaction structures.
    
    \item Error analysis of both formulations: We derive error bounds for each formulation and show, both analytically and empirically, that the integral SINDy approach accumulates lower numerical and regression error, supporting its improved performance for chemical kinetics data. 
\end{itemize}
Together, these contributions advance the goal of fully automated, data-driven chemical mechanism discovery by unifying sparse system identification, chemical network reconstruction, and rigorous error analysis within a single framework.

The remainder of this paper is structured as follows. Section~\ref{sec:crn} briefly reviews the formalism of chemical reaction networks (CRNs), mass-action kinetics, and challenges in reconstructing reaction graphs from observed dynamics. Section~\ref{sec:sindy} develops the SINDy-based identification methodology for CRNs, derives differential and integral variants using piecewise cubic-spline interpolation, and presents corresponding error analyses. Section~\ref{sec:graph} describes our algorithmic scheme for graph recovery, mapping the SINDy-inferred ODE model to a candidate reaction network graph consistent with mass-action kinetics. Section~\ref{sec:numex} presents numerical experiments on two representative chemical systems under noise-free and noisy data to assess ODE recovery and graph inference performance for both formulations. Finally, Section~\ref{sec:concl} concludes with a summary of contributions, limitations, and directions for future work.

\section{Chemical reaction networks}\label{sec:crn}
A chemical reaction network (CRN) is a many-body dynamical system describing the interactions and time-evolution of multiple chemical species in a well-mixed environment. CRNs are widely used in chemistry, biology, and epidemiology to represent population dynamics driven by discrete events. Formally, a CRN is defined by a triple \( (\mathcal{S},\mathcal{Q},\mathcal{R})\), where \(\mathcal{S}=\{S_{\alpha}\}_{\alpha=1}^{M}\) is the set of species, \(\mathcal{Q} = \{q_{i}\}_{i=1}^{N}\) is the set of complexes containing \emph{all possible} linear combinations \(q_{i} = \sum_{\alpha=1}^{M}Q_{\alpha,i} S_{\alpha}\) with integer weights \(Q_{\alpha,i}\geq 0\) such that \(\sum_{\alpha=1}^{M} Q_{\alpha,i} \leq p\) for some total degree \(p\) and with at least one $Q_{\alpha,i}$ nonzero for each $i$. The degree $p$ corresponds to the highest reaction order in the network and we have the relation
\[
    N = \binom{M+p}{p}-1.
\]
Finally, \(\mathcal{R}\) is the set of directed reactions \((q_i,q_j)\) where each ordered pair represents the transformation of complex~\(q_{i}\) into complex~\(q_{j}\).

When the population of a CRN is sufficiently large, the system can be described in terms of continuous time-dependent concentrations \(x_{\alpha}(t)\), leading to deterministic mass-action kinetics.  Let \(\mathbf{x}(t)=[x_{1}(t),x_{2}(t),\ldots,x_{M}(t)]^{T} \in \mathbb{R}^{M}\) be the species concentrations vector. Under mass-action kinetics, the rate \(r_{i,j} \) of a reaction \((q_i,q_j)\in\mathcal{R}\) is proportional to the product of the reactant concentrations,
\[
    r_{i,j}(\mathbf{x}(t)) = k_{i,j} \prod_{\alpha=1}^{M} x_{\alpha}^{Q_{\alpha,i}}(t) \,,
\]
with \(k_{i,j} \geq 0\) the reaction rate constant, and with \(Q_{\alpha,i}\) the stoichiometric coefficient of species \(S_{\alpha}\) in complex \(q_{i}\). The contribution of reaction \((q_{i},q_{j})\) to the time evolution of species \(\beta\) is given by 
\[
    \kappa_{\beta}^{(i,j)}(\mathbf{x}(t)) = \bigl[Q_{\beta,j} - Q_{\beta,i}\bigr]r_{i,j}(\mathbf{x}(t))  = k_{i,j}\bigl[Q_{\beta,j} - Q_{\beta,i}\bigr]\prod_{\alpha = 1}^{M} x_{\alpha}^{Q_{\alpha,i}}(t) .
\]
Note that the kinetic term \(\kappa_{\beta}^{(i,j)}\) will always be non-negative for every reaction in which species $\beta$ is not included in the reactant complex \(q_i\).

A CRN is said to be \textit{reversible} if \( (q_i, q_j) \in \mathcal{R} \) implies \( (q_j, q_i) \in \mathcal{R} \); it is said to be \textit{weakly reversible} if for every reaction \( (q_i, q_j) \in \mathcal{R} \) there exists a path \(\{(q_j, q_1), (q_1, q_2), \ldots,(q_{\ell-1}, q_\ell), (q_\ell, q_i) \}\) from \( q_j \) to~\( q_i \). In the following, we refer to both reversible and weakly reversible networks as \textit{closed}. Conversely, a CRN that is neither reversible nor weakly reversible is called \emph{open}. Examples of these types of networks are illustrated in Figure~\ref{fig:crn-types}.

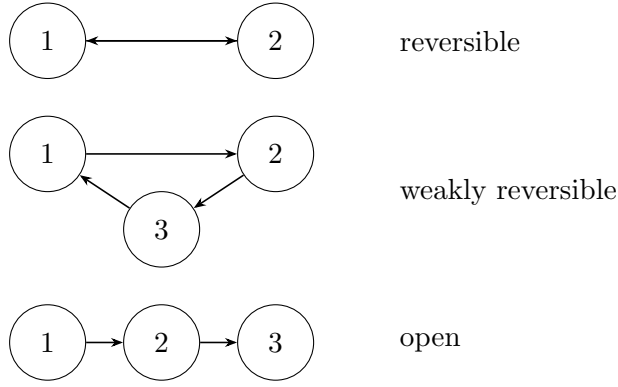
\begin{figure}[h]
    \centering
    \begin{tikzpicture}[
        node/.style={circle, draw, minimum size=1cm},
        arrow/.style={-{Stealth[scale=0.8]}, semithick}
        ]
    \node[node] (A1) at (0,0) {1};
    \node[node] (B1) at (3.0,0) {2};
    \draw[arrow] (A1) -- (B1);
    \draw[arrow] (B1) -- (A1);
    \node[anchor=west] at (4.5,0) {reversible};

    \node[node] (A2) at (0,-1.5) {1};
    \node[node] (B2) at (3.0,-1.5) {2};
    \node[node] (C2) at (1.5,-2.5) {3};
    \draw[arrow] (A2) -- (B2);
    \draw[arrow] (B2) -- (C2);
    \draw[arrow] (C2) -- (A2);
    \node[anchor=west] at (4.5,-2.0) {weakly reversible};

    \node[node] (A3) at (0,-4) {1};
    \node[node] (B3) at (1.5,-4) {2};
    \node[node] (C3) at (3,-4) {3};
    \draw[arrow] (A3) -- (B3);
    \draw[arrow] (B3) -- (C3);
    \node[anchor=west] at (4.5,-4) {open};
    \end{tikzpicture}
\caption{Examples of CRNs being reversible, weakly reversible, and open}
\label{fig:crn-types}
\end{figure}

Let \(\mathbf{Q} = [Q_{\alpha,i}] \in \mathbb{R}^{M \times N}\) be the complexes stoichiometry matrix and denote by 
\[
    \mathbf{d}(\mathbf{x}(t))=\bigg[\prod_{\alpha = 1}^{M} x_{\alpha}^{Q_{\alpha,1}}(t),\prod_{\alpha = 1}^{M} x_{\alpha}^{Q_{\alpha,2}}(t),\ldots,\prod_{\alpha = 1}^{M} x_{\alpha}^{Q_{\alpha,N}}(t) \bigg]^{T}  \in \mathbb{R}^{N}
\] 
the mass-action dictionary vector whose entries are the monomial contributions of each complex \(q_{i}\). Each entry of \(\mathbf{d}(\mathbf x(t))\) is an element in the set of all possible monomial combinations of the species concentrations \(\mathbf{x}(t)\) up to total degree \(p\).  Further, let
\[
\mathbf{K} = 
    \begin{bmatrix}
        -\sum_{i=1}^{N} k_{1,i} & k_{1,2} & \ldots & k_{1,N} \\ 
        k_{2,1} &  -\sum_{i=1}^{N} k_{2,i} & \ldots & k_{2,N} \\
        \vdots & \vdots & \ddots & \vdots \\
        k_{N,1} & k_{N,2} & \ldots & -\sum_{i=1}^{N} k_{N,i} 
    \end{bmatrix}^{T} \in \mathbb{R}^{N\times N} 
\]
be a Kirchhoff matrix which encodes the graph structure of the CRN. This matrix is such that all its columns sum to zero, all its off-diagonal entries are non-negative, and all its diagonal entries are non-positive.

Finally, the dynamical behaviour of a CRN is characterised by the (generally nonlinear) ODE system
\begin{equation} \label{eq:crn-ode}
    \dot{\mathbf{x}}(t) = \mathbf{Q}\mathbf{K}\mathbf{d}(\mathbf{x}(t)) = \mathbf{f}(\mathbf{x}(t)), \quad \mathbf{x}(0) \text{ given,}
\end{equation}
or its equivalent Picard integral formulation
\begin{equation} \label{eq:crn-integral}
    \mathbf{x}(t) -  \mathbf{x}(0) = \int_{0}^{t} \mathbf{Q}\mathbf{K}\mathbf{d}(\mathbf{x}(t)) \ dt = \int_{0}^{t} \mathbf{f}(\mathbf{x}(t)) \ dt, 
\end{equation}
where \(\mathbf{x}(0)\) is the initial state of the CRN, and \(\mathbf{f}(\mathbf{x}(t))\) is a column vector with \(M\) entries, each a polynomial function of the variables \(\mathbf{x}(t)\) with general form
\[
    \dot{x}_{\alpha}(t) = f_{\alpha}(\mathbf{x}(t))  = \sum_{\substack{\text{all reactions}\\(i,j)}} \kappa_{\alpha}^{(i,j)}(\mathbf{x}(t)),
\]
that is, they are the sum of all the kinetics \( \kappa_{\alpha}^{(i,j)}(\mathbf{x}(t)) \) that involve species \(\alpha\). Hence, all positive terms of \(f_{\alpha}(\mathbf{x}(t))\) correspond to reactions where species \(\alpha\) gains mass, and all negative terms of \(f_{\alpha}(\mathbf{x}(t))\) correspond to reactions where species \(\alpha\) loses mass. It can be shown that a polynomial ODE system with general form~(\ref{eq:crn-ode}) describes a chemically valid mass-action CRN if and only if, for each species \(\alpha\), every negative monomial in
\[
    \dot{x}_{\alpha}(t) = f_{\alpha}(\mathbf{x}(t))
\]
contains the variable~$x_{\alpha}(t)$ raised to a nonzero power~\cite{feinberg2019foundations}. In other words, in a chemically valid mass-action CRN, species only lose mass through reactions \((q_i,q_j)\) where their reactant stoichiometric coefficient \(Q_{\alpha,i}\) is greater than their product stoichiometric coefficient \(Q_{\alpha,j}\).

The coefficient matrix \(\mathbf{C} = \mathbf{Q}\mathbf{K} \in \mathbb{R}^{M\times N}\) relates the time-evolution of concentrations with the complexes in the network, and it is the object we aim to recover from time-series measurements of species concentrations; see also \eqref{eq:crn-ode}. Since the dictionary vector \(\mathbf{d}(\mathbf x(t))\) includes all monomial combinations of the species' concentrations up to total degree \(p\), and most of these combinations do not contribute to the reaction, the matrix \(\mathbf{C}\) is typically very sparse. Notably, the active (nonzero) columns of \( \mathbf{C} \) directly reveal all source complexes in the network; this is advantageous for data-driven recovery of closed CRNs where every complex is a source complex.

In both open and closed networks, a recurring phenomenon is the presence of conservation laws among species. These conservation laws are linear relations between the time-derivatives of concentrations,
\[
    \sum_{\alpha \in \Gamma_{\ell}} \dot{x}_{\alpha}(t) = 0 ,
\]
where the \(\Gamma_{\ell}\) are index sets of species with conserved mass, known as moieties. Integrating these conservation laws yields constants
\[
    \sum_{\alpha \in \Gamma_{\ell}} x_{\alpha}(t) = \sum_{\alpha \in \Gamma_{\ell}} x_{\alpha}(0)
\]
fixed by the initial conditions of the system. Conservation laws introduce linear dependencies between the concentrations \(\mathbf{x}(t)\), which in turn induce polynomial relations among the entries of~\(\mathbf{d}(\mathbf x (t))\). This can lead to issues in the recovery of the ODE system \eqref{eq:crn-ode} or \eqref{eq:crn-integral} using linear combinations of dictionary elements, as the same equation might have various identical representations.

\section{SINDy for chemical reaction networks} \label{sec:sindy}
In this section we introduce and analyse the SINDy framework in the context of the chemical reaction network dynamical problems
\begin{align*}
    \dot{\mathbf{x}}(t) &= \mathbf{C}\mathbf{d}(\mathbf{x}(t)) , \quad \mathbf{x}(0) \text{ given,}
    \\ 
    \mathbf{x}(t) -  \mathbf{x}(0) &= \int_{0}^{t} \mathbf{C}\mathbf{d}(\mathbf{x}(t)) \ dt .
\end{align*} 
Consider a CRN with \(M\) species whose concentrations \(x_{\alpha}(t)\) evolve through time. Assume we have measured the trajectories of every species in the network at time points \(0 = t_{0} < t_1 < \ldots < t_{n}\) and denote the corresponding values as
\[
    \overline{x}_{\alpha,i} = x_{\alpha}(t_{i}) + \xi_{\alpha,i}, 
\]
where $\xi_{\alpha,i}$ denotes a (small)  measurement noise for species \(\alpha\) at timestep \(t_{i}\). We assume the $\xi_{\alpha,i}$ to be i.i.d.\ Gaussian. We collect these values in an $M \times (n+1)$ data matrix \({\mathbf{X}} = [\overline{x}_{\alpha,i}]\). 
Using this data, we construct an \(N\times (n+1)\) dictionary matrix \({\mathbf{D}} = \mathbf{D}(\mathbf{X})\) whose rows are entry-wise polynomial combinations of the rows of \(\mathbf{X}\) (with polynomials of total degree $\leq p$). 
Then we have the relation 
\[
    \mathbf{C} \mathbf{D} = 
    \mathbf{f}(\mathbf{X}) \in \mathbb{R}^{M \times (n+1)},
\]
where $\mathbf f$ is applied column-wise to $\mathbf{X}$.

The structure of \(\mathbf{D}\) arises from the law of mass-action which offers a notable advantage when applying the SINDy framework to CRNs: it removes the need to specify an overly redundant dictionary. Furthermore, as in practice reactions with three reactant molecules are extremely rare, and most of these complex complex chemical processes can be reduced to systems of monomolecular and bimolecular reactions \cite{steinfeld1999chemical, upadhyay2006chemical}, a quadratic \(p=2\) dictionary is generally sufficient to capture the system's dynamics.

\subsection{Numerical differentiation vs integration}
Now, from a numerical perspective, we have two ways of approaching the data-driven discovery problem. As we only have access to time series data of the concentrations, we require to either numerically differentiate our data matrix \(\mathbf{X}\) to approximate the differential dynamical system~(\ref{eq:crn-ode}), or to numerically integrate our dictionary matrix \(\mathbf{D}\) to approximate the integral dynamical system~(\ref{eq:crn-integral}). To do this, we introduce time differentiation and integration integration  matrices denoted by \(\mathbf{L} \in \mathbb{R}^{(n+1) \times (n+1)}\) and \(\mathbf{J} \in \mathbb{R}^{(n+1) \times (n+1)}\), respectively. These operators are constructed using cubic spline interpolation of canonical basis vectors as follows: let \(\{t_{k}\}_{k=0}^{n}\) be a sequence of knots, and for each \(i \in \{0, \ldots, n\}\), define the not-a-knot scalar-valued interpolant cubic spline \(s_i(t)\) as
\[
    s_i(t) = \min_{s \in \mathbb{S}_3} \Biggl[ \sum_{k=0}^{n} \bigl( \delta_{i,k} - s_{i,k}  \bigr)^{2} \Biggr] \,,
\]
where \(\mathbb{S}_3\) denotes the space of all cubic splines with not-a-knot boundary conditions at \(t_0\) and \(t_n\).  
We then define the matrices \(\mathbf{L}\) and \(\mathbf{J}\) row-wise as:
\begin{align*}
    \mathbf{L}_{i,:} &= \left[ s_i'(t_0),\, s_i'(t_1),\, \ldots,\, s_i'(t_n) \right], \\
    \mathbf{J}_{i,:} &= \left[ \int_{t_0}^{t_0} s_i(t) \, dt,\ \int_{t_0}^{t_1} s_i(t) \, dt,\ \ldots,\ \int_{t_0}^{t_n} s_i(t) \, dt \right].
\end{align*}
Using these matrices, we can approximate
\begin{eqnarray}
    \mathbf{X}\mathbf{L} &=& \displaystyle\frac{d}{dt}\mathbf{X} + \mathbf{E}_{\text{dif}} \,, \label{eq:Edif}\\ 
    \mathbf{D}\mathbf{J} &=& \displaystyle\int_{t_0}^{t_n} \mathbf{D} \ dt +  \mathbf{E}_{\text{int}}\,,\label{eq:Eint}
\end{eqnarray}
where \(\mathbf{E}_{\text{dif}} \in \mathbb{R}^{M\times(n+1)}\) and \(\mathbf{E}_{\text{int}} \in \mathbb{R}^{N\times(n+1)}\) are matrices containing the errors incurred by numerical differentiation and integration, respectively.

\subsection{Dealing with rank deficiencies}

We can write the spline-approximated differential and integral dynamical problems as
\begin{align*}
    \mathbf{X}\mathbf{L} &= \mathbf{C}_{\text{dif}} \mathbf{D}  \,, \\ 
    \mathbf{X} - \mathbf{X}_{\text{IVP}} &= \mathbf{C}_{\text{int}} \mathbf{D}\mathbf{J} \,,
\end{align*}
where \(\mathbf{X}_{\text{IVP}} \in \mathbb{R}^{M \times (n+1)}\) is the initial value matrix of the system with each column a copy of \(\mathbf{X}[:,t_0]\). The unregularised model recovery, for each formulation, consists of solving the respective optimisation problem
\begin{align*}
    \mathbf{C}_{\mathrm{dif,ls}} &= \min_{\mathbf{C} \in \mathbb{R}^{M\times N}}\| \mathbf{X}\mathbf{L} - \mathbf{C} \mathbf{D} \|_{F}^{2} \,, \\ 
    \mathbf{C}_{\mathrm{int,ls}} &= \min_{\mathbf{C} \in \mathbb{R}^{M\times N}} \| \mathbf{X} - \mathbf{X}_{\text{IVP}} - \mathbf{C} \mathbf{D}\mathbf{J} \|_{F}^{2} 
\end{align*}
in the Frobenius matrix norm $\|\cdot\|_F$. This corresponds to a maximum likelihood estimate. 
If the dictionary matrix \(\mathbf{D}\) were of full row rank~$N$, we could uniquely determine minimising coefficient matrices 
\begin{align*}
    \mathbf{C}_{\mathrm{dif,ls}}  &= \mathbf{X}\mathbf{L}\mathbf{D}^{\dagger}\,, \\
    \mathbf{C}_{\mathrm{int,ls}}  &= \mathbf{X}_{0}\bigl[\mathbf{D}\mathbf{J}\bigr]^{\dagger} \,, 
\end{align*}
where \(\mathbf{A}^{\dagger} = \mathbf{A}^{T} (\mathbf{A}\mathbf{A}^{T})^{-1}\) denotes the Moore--Penrose pseudoinverse of a short and wide matrix~\( \mathbf{A}\). However, as mentioned above, the dictionary \(\mathbf{D}\) may be rank-defiecient, e.g., due to the presence of conservation laws in CRNs. The rank may be increased by adding measured concentrations for multiple experiments with different initial conditions. 
To be more precise, we  consider a number of $w$ experiments, each performed with distinct initial conditions such that the total mass of each moiety differs across realisations. The data is organised as:
\begin{align*}
    \widetilde{\mathbf{X}} &= \begin{bmatrix}
        \mathbf{X}^{(1)},  & \mathbf{X}^{(2)},  & \ldots, & \mathbf{X}^{(w)} 
    \end{bmatrix} \,, \\
    \widetilde{\mathbf{X}}_{\text{IVP}} &= \begin{bmatrix}
        \mathbf{X}^{(1)}_{\text{IVP}},  &  \mathbf{X}^{(2)}_{\text{IVP}},  & \ldots, &    \mathbf{X}^{(w)}_{\text{IVP}}
    \end{bmatrix} \,, \\
    \widetilde{\mathbf{X}}_{0} &= \widetilde{\mathbf{X}} - \widetilde{\mathbf{X}}_{\text{IVP}} \,,
\end{align*}
and then used to construct a multi-experiment dictionary
\[
    \widetilde{\mathbf{D}}= \begin{bmatrix}
        \mathbf{D}^{(1)},  & \mathbf{D}^{(2)},  & \ldots,  & \mathbf{D}^{(w)}
    \end{bmatrix}\,,
\]
to then obtain
\begin{align*}
    \mathbf{C}_{\mathrm{dif,ls}}  &= \widetilde{\mathbf{X}}\widetilde{\mathbf{L}}\widetilde{\mathbf{D}}^{\dagger}\,, \\
    \mathbf{C}_{\mathrm{int,ls}}  &= \widetilde{\mathbf{X}}_{0}\bigl[\widetilde{\mathbf{D}}\widetilde{\mathbf{J}}\bigr]^{\dagger} \,, 
\end{align*}
where the multi-experiment integration and differentiation matrices, \(\widetilde{\mathbf{J}}\) and \(\widetilde{\mathbf{L}}\), are constructed as the Kronecker products
\begin{align*}
    \widetilde{\mathbf{J}} &= \mathbf{I}_w \otimes \mathbf{J} \,, \\ 
    \widetilde{\mathbf{L}} &= \mathbf{I}_w \otimes \mathbf{L} \,,
\end{align*}
with \(\mathbf{I}_w \in \mathbb{R}^{w \times w}\) an identity matrix. Necessarily, $\text{rank}(\widetilde{\mathbf{D}}) \geq \text{rank}(\mathbf{D})$, and one could numerically check if sufficiently many experiments have been performed by verifying if $\text{rank}(\widetilde{\mathbf{D}}) = N$.

\subsection{Error analysis}\label{sec:err}

Our error analysis is performed in two parts. First, we  quantify the effect of numerical differentiation or integration of time discretized data that is perturbed by noise. Then, we study how these errors affect the least squares recovery of the coefficient matrices. Together, the results in this section provide a rigorous characterization of how sampling density, noise amplitude, and dictionary conditioning influence the accuracy of the recovered models, and allows us to properly compare the formulations differential and integral formulations introduced in the previous subsection. 
The proofs of the results in this section can be found in the Appendix.

\subsubsection{Effect of time discretization and measurement noise}

We first derive entry-wise and matrix-norm bounds for the errors introduced by cubic spline differentiation and integration when applied to noisy time-series data. For this  analysis, we assume that the noise is globally bounded. This is a usual assumption in the analysis of linear variational inverse problems; see \cite{scherzer}.

\begin{theorem}\label{thm:1}
Consider \(M\) scalar four-times continuously differentiable time series \(\{x_\alpha\}_{\alpha=1}^M \subset C^4([t_0,t_n])\).
Measurements \(x_\alpha(t_i) = x_{\alpha,i}\) are taken at equispaced times
\[
    t_i = t_0 + i h, \qquad i=0,1,\dots,n, \qquad
    h = \frac{t_n - t_0}{n}.
\]
These observations are corrupted by i.i.d.\ bounded additive noise \(\xi_{\alpha,i}\),
\[
    \bar{x}_{\alpha,i} = x_{\alpha,i} + \xi_{\alpha,i}, 
    \qquad |\xi_{\alpha,i}| \le \varepsilon < 1,
\]
independent across \(\alpha\) and \(i\).  
Let \(\mathbf{X} = [ x_{\alpha,i} ] \in \mathbb{R}^{M \times (n+1)} \) denote the data matrix and let \(\bar{\mathbf{X}} =[\bar{x}_{\alpha,i} ]\in \mathbb{R}^{M \times (n+1)} \) denote the noisy data matrix. 
From \(\bar{\mathbf{X}}\), construct a polynomial dictionary
\(\bar{\mathbf{D}}(\bar{\mathbf{X}}) \in \mathbb{R}^{N \times (n+1)}\)
containing all entry-wise monomials of total degree at most \(p\).
The rows of the corresponding noiseless dictionary \(\mathbf{D}\) are scalar time series \(\{d_\beta\}_{\beta=1}^N \subset C^4([t_0,t_n])\). 

Let \(\mathbf{L},\mathbf{J} \in \mathbb{R}^{(n+1)\times(n+1)}\) be the cubic spline
differentiation and integration matrices with not-a-knot boundary conditions and consider the  error matrices
\[
    \mathbf{E}_{\mathrm{dif}} = \dot{\mathbf{X}} - \bar{\mathbf{X}} \mathbf{L},
    \qquad
    \mathbf{E}_{\mathrm{int}} = \int_{t_0}^{t} \mathbf{D}(s)\,ds - \bar{\mathbf{D}}\mathbf{J},
\]
where $\dot{\mathbf{X}} \in \mathbb{R}^{M\times(n+1)}$ denotes the matrix of exact time derivatives evaluated at the sampling points, with entries
\[
(\dot{\mathbf{X}})_{\alpha,i} = \dot{x}_\alpha(t_i),
\]
and $\int_{t_0}^{t} \mathbf{D}(s)\,ds \in \mathbb{R}^{N\times(n+1)}$
denotes the matrix of exact cumulative integrals, with entries
\[
    \left(\int_{t_0}^{t} \mathbf{D}(s)\,ds\right)_{\beta,i}
    = \int_{t_0}^{t_i} d_\beta(s)\,ds. 
\]

Then we have the entry-wise bounds
\begin{align}
    \label{eq:errbnd_entry_dif}
    \bigl|(\mathbf{E}_{\mathrm{dif}})_{\alpha,i}\bigr|
    &\le \frac{\kappa_{\mathrm{dif}}}{n^3}
          + \varepsilon \, \|\mathbf{L}_{:,i}\|_1,\\[4pt]
    \label{eq:errbnd_entry_int}
    \bigl|(\mathbf{E}_{\mathrm{int}})_{\beta,i}\bigr|
    &\le \frac{\kappa_{\mathrm{int}}}{n^4}
          + \varepsilon C_{\beta} \, \|\mathbf{J}_{:,i}\|_1,
\end{align}
where
\begin{align*}
    \kappa_{\mathrm{dif}}
    &= \frac{9+\sqrt{3}}{216}
       \max_{t\in[t_0,t_n]} |x_\alpha^{(4)}(t)| \,(t_n-t_0)^3, \quad \alpha=1,\dots,M,
    \\[4pt]
    \kappa_{\mathrm{int}}
    &= \frac{1}{120}
       \max_{t\in[t_0,t_n]} |d_\beta^{(4)}(t)| \,(t_n-t_0)^4, \quad  \beta=1,\dots,N,
\end{align*}
and $C_\beta$ depends on the noiseless data.
\end{theorem}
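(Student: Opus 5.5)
The plan is to split each error matrix into a deterministic discretisation part and a noise part by linearity of the spline operators. For the differentiation error we write
\[
\mathbf{E}_{\mathrm{dif}} = \bigl(\dot{\mathbf{X}} - \mathbf{X}\mathbf{L}\bigr) - \boldsymbol{\Xi}\mathbf{L}, \qquad \boldsymbol{\Xi} = [\xi_{\alpha,i}],
\]
and bound the two pieces separately with the triangle inequality. The first piece has entries $\dot x_\alpha(t_i) - s'_{x_\alpha}(t_i)$, where $s_{x_\alpha} = \sum_k x_{\alpha,k} s_k$ is the not-a-knot cubic spline interpolant of $x_\alpha$. This holds because $\mathbf{L}$ is built from the cardinal splines $s_k$, so $(\mathbf{X}\mathbf{L})_{\alpha,i} = \sum_k x_{\alpha,k}s_k'(t_i)$. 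We then invoke the classical derivative error estimate for cubic spline interpolation on a uniform mesh (Hall--Meyer type),
\[
\|x' - s'\|_\infty \le c_1 h^3 \|x^{(4)}\|_\infty .
\]
Substituting $h=(t_n-t_0)/n$ gives the $\kappa_{\mathrm{dif}}/n^3$ term. The constant $(9+\sqrt3)/216$ is the one from the sharp first-derivative estimate; I would cite it rather than rederive it, noting that the not-a-knot conditions retain the $O(h^3)$ rate with a constant of the same type.

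For the noise piece, $|(\boldsymbol{\Xi}\mathbf{L})_{\alpha,i}| = \bigl|\sum_k \xi_{\alpha,k}\mathbf{L}_{k,i}\bigr| \le \varepsilon\|\mathbf{L}_{:,i}\|_1$ by Hölder's inequality. Note that the $i$-th column of $\mathbf{L}$ collects the values $s_k'(t_i)$ over $k$, which is exactly the row convention defined in the paper. This completes \eqref{eq:errbnd_entry_dif}.

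For the integration error we proceed analogously, but the noise now enters nonlinearly through the dictionary. We write
\[
\mathbf{E}_{\mathrm{int}} = \Bigl(\int_{t_0}^{t}\mathbf{D}\,ds - \mathbf{D}\mathbf{J}\Bigr) - (\bar{\mathbf{D}}-\mathbf{D})\mathbf{J}.
\]
The first piece has entries $\int_{t_0}^{t_i}(d_\beta - s_{d_\beta})\,ds$. Using the uniform spline interpolation bound $\|d - s\|_\infty \le c_0 h^4\|d^{(4)}\|_\infty$, integrating over an interval of length at most $t_n-t_0$ would only give $O(h^4)(t_n-t_0)$, i.e.\ $(t_n-t_0)^5/n^4$. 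To reach the stated $(t_n-t_0)^4/(120 n^4)$ I would instead use the sharper local, per-interval quadrature estimate and accept the paper's constant $1/120$ as coming from a cited bound. I would flag the exact power of $(t_n-t_0)$ as something to check against the reference.

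For the second piece we need an entrywise bound $|\bar d_{\beta,k} - d_{\beta,k}| \le \varepsilon C_\beta$. Each $d_\beta$ is a monomial $\prod_\alpha x_\alpha^{Q_{\alpha,\beta}}$ of degree at most $p$. Expanding $\prod_\alpha (x_{\alpha,k}+\xi_{\alpha,k})^{Q_{\alpha,\beta}}$ binomially, every term except the leading one contains at least one factor $\xi$. Using $|\xi|\le\varepsilon<1$, so that $\varepsilon^j\le\varepsilon$ for $j\ge1$, we get
\[
|\bar d_{\beta,k}-d_{\beta,k}| \le \varepsilon\Bigl(\prod_\alpha (X_{\max}+1)^{Q_{\alpha,\beta}} - \prod_\alpha X_{\max}^{Q_{\alpha,\beta}}\Bigr) =: \varepsilon C_\beta ,
\]
where $X_{\max} = \max |x_{\alpha,k}|$ over the noiseless data. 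This is where the hypothesis $\varepsilon<1$ is used, and $C_\beta$ depends only on the noiseless data, as claimed. Hölder's inequality against the column $\mathbf{J}_{:,i}$ then gives $\varepsilon C_\beta\|\mathbf{J}_{:,i}\|_1$, completing \eqref{eq:errbnd_entry_int}.

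The independence and i.i.d.\ assumptions are not needed for these worst-case bounds. The main obstacle is pinning down the deterministic spline constants with not-a-knot end conditions, since the classical sharp constants are stated for clamped or complete splines. My first attempt would be to cite existing uniform-mesh not-a-knot estimates. Failing that, I would bound the not-a-knot spline minus the complete spline through the diagonally dominant tridiagonal system, showing that the difference is of the same order in $h$.
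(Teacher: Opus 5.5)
Your argument follows the paper's proof step for step. Both split each error matrix into a spline part and a noise-propagation part ($\boldsymbol{\Xi}\mathbf{L}$, resp.\ $(\bar{\mathbf{D}}-\mathbf{D})\mathbf{J}$), cite Hall--Meyer for the spline part, and apply H\"older against the column $\mathbf{L}_{:,i}$ or $\mathbf{J}_{:,i}$ for the noise. The one place you differ is the dictionary perturbation, and there your argument is better. The paper uses a first-order Taylor expansion and simply discards the $\mathcal{O}(\varepsilon^2)$ remainder, so its inequality $|(\bar{\mathbf{D}}-\mathbf{D})_{\beta,i}|\le\varepsilon C_\beta$ holds only to leading order. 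Its $C_\beta=\sum_\alpha|c_{\beta,\alpha,i}|$ also silently depends on $i$. Your binomial expansion with $\varepsilon^j\le\varepsilon$ gives an exact bound with an $i$-independent constant.

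For the deterministic constants the paper does no more than you propose, namely cite Hall--Meyer, so your proof is no less complete than the paper's. Both of your doubts are justified, though, and neither remedy can deliver the constants as stated, because the statement itself is off.

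For the integral, the per-interval route is the right tool, but it does not remove the length factor. Apply the corrected trapezoidal rule on each interval; the derivative terms telescope. With $s$ the spline interpolant of $d$ and $\eta_j\in(t_j,t_{j+1})$, this gives
\[
\int_{t_0}^{t_i}(d-s)\,dt=\frac{h^2}{12}\Bigl[(d'-s')(t_0)-(d'-s')(t_i)\Bigr]+\frac{h^5}{720}\sum_{j=0}^{i-1}d^{(4)}(\eta_j),
\]
that is, $\tfrac{1}{720}\|d^{(4)}\|_\infty h^4(t_i-t_0)$ plus an $\mathcal{O}(h^5)$ boundary term. The factor $t_i-t_0$ is intrinsic: rescaling time shows the error scales like $\|d^{(4)}\|_\infty(t_n-t_0)^5n^{-4}$. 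Concretely, for $d(t)=t^4$ the last term equals $h^4(t_i-t_0)/30$. This exceeds the claimed $\tfrac{1}{120}\cdot 24\,h^4=h^4/5$ once $t_i-t_0>6$ and $n$ is large, and the experiments use $t_n-t_0=20$.

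For the derivative, $(9+\sqrt3)/216$ is Hall--Meyer's constant for complete (clamped) splines. For the not-a-knot interpolant of $t^4$ on a long uniform mesh, the slope error at $t_0$ is $\tfrac{4\sqrt3+6}{3}h^3\approx 0.18\,h^3\|x^{(4)}\|_\infty$, more than three times the stated constant.

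So your fallback, comparing the not-a-knot and complete splines through the diagonally dominant tridiagonal system, is the right way to make the argument rigorous. What it actually proves is the derivative bound with a larger constant, and the integral bound with $(t_n-t_0)^5$ in place of $(t_n-t_0)^4$. The noise terms are unaffected.
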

\smallskip 
The bounds~\eqref{eq:errbnd_entry_dif}–\eqref{eq:errbnd_entry_int} still depend on norms involving the numerical differentiation and integration matrices $\mathbf{L}_{i,:}$ and $\mathbf{J}_{i,:}$. The following theorem shows that  $\|\mathbf{J}_{:,i}\|_1$ can be bounded independently of $n$, the number of time points, whereas  $\|\mathbf{L}_{:,i}\|_1$ cannot. Together with the slower growth of $\frac{\kappa_{\mathrm{int}}}{n^4}$ over $\frac{\kappa_{\mathrm{dif}}}{n^3}$, these results provide theoretical justification for the superiority of the integral formulation.

\begin{theorem} \label{cor:1}
Let $\{t_k\}_{k=0}^n$ be uniformly spaced knots with step size $h = t_{k+1}-t_k$ over $[t_0, t_n]$. 
Let $\mathbf{J}, \mathbf{L} \in \mathbb{R}^{(n+1)\times (n+1)}$ be the integration and differentiation matrices constructed from not-a-knot cubic splines $s_i$ as
\[
    \mathbf{L}_{i,:} = \Big[s_i'(t_0), \dots, s_i'(t_n)\Big], \qquad
    \mathbf{J}_{i,:} = \Biggl[\int_{t_0}^{t_0} s_i(t) \ dt, \dots, \int_{t_0}^{t_n} s_i(t) \ dt \Biggr].
\]

Then there exists a constant $C>0$, independent of $n$ and $h$, such that
\[
    \|\mathbf{L}_{:,i}\|_1 \le C\, n, \quad \|\mathbf{L}\|_\infty \le C\, n, \qquad
    \|\mathbf{J}_{:,i}\|_1 \le C\, (t_n-t_0), \quad \|\mathbf{J}\|_\infty \le C\, (t_n-t_0).
\]
\end{theorem}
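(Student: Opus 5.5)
The plan is to reduce all four bounds to a single \emph{uniform} stability estimate for not-a-knot cubic spline interpolation on a uniform grid. Let $\mathbf{S}$ denote the linear map sending data $\mathbf{y}\in\mathbb{R}^{n+1}$ to its not-a-knot interpolant $s_{\mathbf{y}}=\sum_i y_i s_i$.

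\textbf{Step 1 (uniform stability).} We will show that
\[
\|s_{\mathbf y}\|_{L^\infty[t_0,t_n]}\le \Lambda\|\mathbf y\|_\infty,
\qquad
\max_k |s_{\mathbf y}'(t_k)|\le \Lambda' h^{-1}\|\mathbf y\|_\infty,
\]
with $\Lambda,\Lambda'$ independent of $n$ and $h$. The proof goes through the moment (or slope) formulation. The slopes $m_k=s'(t_k)$ satisfy the tridiagonal system $m_{k-1}+4m_k+m_{k+1}=\tfrac{3}{h}(y_{k+1}-y_{k-1})$ for $1\le k\le n-1$, closed by the two not-a-knot equations (continuity of $s'''$ at $t_1$ and $t_{n-1}$). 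After the standard elimination of $m_0,m_n$ using the not-a-knot rows, the interior matrix is strictly diagonally dominant up to a bounded perturbation in the first and last rows. We will check this on the explicit $2\times 2$ corner blocks for the uniform case. It follows that the inverse has an $\infty$-norm bounded independently of $n$. Since the right-hand side is bounded by $6h^{-1}\|\mathbf y\|_\infty$, we get $|m_k|\le \Lambda' h^{-1}\|\mathbf y\|_\infty$. On each interval $s$ is the cubic Hermite interpolant of $y_k,y_{k+1},m_k,m_{k+1}$, so $|s|\le \|\mathbf y\|_\infty + \tfrac{h}{4}(|m_k|+|m_{k+1}|)$, which yields $\Lambda$.

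This boundary treatment is the main obstacle. The not-a-knot rows are not diagonally dominant, so we first try the elimination route. If that does not close, we fall back on the known result (de Boor) that not-a-knot interpolation on uniform meshes has a bounded Lebesgue constant, equivalently a bounded $\|\mathbf S\|_{\infty\to\infty}$.

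\textbf{Step 2 (reading off the norms).} The $i$-th column of $\mathbf{L}$ is $(s_0'(t_i),\dots,s_n'(t_i))^T$. Therefore $\|\mathbf L_{:,i}\|_1=\sum_j |s_j'(t_i)| = s_{\mathbf y}'(t_i)$ for the sign vector $y_j=\operatorname{sign}(s_j'(t_i))$, which has $\|\mathbf y\|_\infty=1$. Step 1 then gives $\|\mathbf L_{:,i}\|_1\le \Lambda' h^{-1}=\Lambda' n/(t_n-t_0)$. The row sums $\|\mathbf L\|_\infty=\max_i\sum_j|s_i'(t_j)|$ are handled by applying the dual argument to $\mathbf L^T$, i.e.\ bounding the $\ell^1\to\ell^1$ norm of the slope map. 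This follows from the same tridiagonal inverse, because that inverse also has bounded $1$-norm: its entries decay geometrically like $(2-\sqrt3)^{|k-l|}$ away from the boundary corrections. The constant $(t_n-t_0)^{-1}$ is absorbed into $C$, or we take $t_n-t_0$ as fixed.

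For $\mathbf J$ the same trick works: $\|\mathbf J_{:,i}\|_1=\int_{t_0}^{t_i} s_{\mathbf y}\,dt\le (t_n-t_0)\Lambda$. For rows, we bound $\sum_j|\int_{t_0}^{t_j}s_i|\le \sum_j\int|s_i|$. The geometric decay of the cardinal spline $s_i$ away from $t_i$ gives $\int|s_i|\le c\,h$, which also follows from the $\ell^1$ bound on the slope-system inverse together with the Hermite representation. Summing over $j\le n$ gives $n\cdot ch=c(t_n-t_0)$. Finally we take $C$ as the maximum of all the constants.

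The fact that the $\mathbf L$ bound genuinely scales like $n$ (sharpness) is not needed for the statement. We note only that $s_i'(t_i)\sim h^{-1}$.
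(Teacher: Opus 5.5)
Your argument is correct, but it does not follow the paper's route.

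\textbf{The paper's route.} The paper asserts that each $s_i$ is supported on $[t_{i-2},t_{i+2}]$, with $\|s_i\|_\infty=\mathcal{O}(1)$, $\int|s_i|=\mathcal{O}(h)$ and $\|s_i'\|_\infty=\mathcal{O}(h^{-1})$. It then reads off all four bounds by counting how many splines are active at a given knot: about four for $\mathbf{L}$, about $k$ for $\mathbf{J}_{:,k}$. This is short, but the locality it uses belongs to B-splines, not to the interpolating splines with $s_i(t_k)=\delta_{i,k}$. The latter have global support, and their slopes only decay like $(2-\sqrt3)^{|k-i|}$. So the paper's counting really has to be a summation of geometric tails.

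\textbf{Your route.} You need no support assumption at all. You use uniform $\ell^\infty$ and $\ell^1$ bounds on the slope map $\mathbf{y}\mapsto\mathbf{L}^T\mathbf{y}$. The sign-vector trick turns $\|\mathbf{L}_{:,i}\|_1$ and $\|\mathbf{J}_{:,i}\|_1$ into $s_{\mathbf y}'(t_i)$ and $\int_{t_0}^{t_i}s_{\mathbf y}$. This gives a rigorous proof for the actual cardinal splines, with explicit constants. It also makes visible that the bound $Cn$ for $\mathbf{L}$ presumes $t_n-t_0$ fixed; the paper assumes this silently when it writes $\mathcal{O}(h^{-1})=\mathcal{O}(n)$.

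\textbf{The boundary step.} The step you flag as the main obstacle closes without the fallback to de Boor. Continuity of $s'''$ at $t_1$ gives $m_0=m_2+\frac{2}{h}(2y_1-y_0-y_2)$. Substituting this into the first interior equation yields $4m_1+2m_2=\frac{1}{h}(5y_2-4y_1-y_0)$, and symmetrically at $t_{n-1}$. Hence the reduced matrix for $m_1,\dots,m_{n-1}$ has the following properties.
\begin{itemize}
\item It is strictly row diagonally dominant with margin $2$, so its inverse has $\infty$-norm at most $\tfrac12$.
\item For $n\ge5$ it is strictly column diagonally dominant with margin $1$, so its inverse has $1$-norm at most $1$.
\end{itemize}
The second property is exactly the $\ell^1$ estimate you need for $\|\mathbf{L}\|_\infty$ and for $\int|s_i|\le ch$. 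The only cost is a constant $10/h$ instead of $6/h$ in the two modified right-hand sides, plus an extra $\mathcal{O}(h^{-1})\|\mathbf{y}\|_\infty$ when you recover $m_0$ and $m_n$.
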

To interpret Theorem~\ref{thm:1}, we can compare how the resulting bounds~\eqref{eq:errbnd_entry_dif}–\eqref{eq:errbnd_entry_int} for the differential and integral formulations behave with respect to the number \(n\) of spline knots and the noise intensity \(\varepsilon\). Using the bounds from Theorem~\ref{cor:1},
\begin{align*}
    |(\mathbf{E}_{\mathrm{dif}})_{\alpha,i}|
    &\leq \frac{\kappa_{\mathrm{dif}}}{n^3} + C\varepsilon n,\\[6pt]
    |(\mathbf{E}_{\mathrm{int}})_{\beta,i}|
    &\leq \frac{\kappa_{\mathrm{int}}}{n^4} + C\varepsilon C_{\beta}(t_n-t_0).
\end{align*}
And in the large \(n\) limit, \(n \rightarrow \infty \), the entrywise error bounds \eqref{eq:errbnd_entry_dif}-\eqref{eq:errbnd_entry_int} then behave as
\begin{align*}
    \bigl|(\mathbf{E}_{\mathrm{dif}})_{\alpha,i}\bigr| &= \mathcal{O}(\varepsilon\, n),\\
    \bigl|(\mathbf{E}_{\mathrm{int}})_{\beta,i}\bigr| &= \mathcal{O}(\varepsilon).
\end{align*}
So, for large $n$, the integration error is uniformly bounded, while the differentiation error grows linearly with $n$.

For non-noisy data \(\varepsilon = 0\), the large \(n\) behaviour of the entrywise bounds~\eqref{eq:errbnd_entry_dif}–\eqref{eq:errbnd_entry_int} reduces to
\begin{align*}
    \bigl|(\mathbf{E}_{\mathrm{dif}})_{\alpha,i}\bigr| &= \mathcal{O}(n^{-3}), \\
    \bigl|(\mathbf{E}_{\mathrm{int}})_{\beta,i}\bigr| &= \mathcal{O}(n^{-4}).
\end{align*}
\smallskip
It will be required for the following subsection to extend the above entrywise bounds~\eqref{eq:errbnd_F2_dif}--\eqref{eq:errbnd_F2_int} to bounds in the matrix 2-norm and Frobenius-norm.

\begin{corollary} \label{cor:2}
    Using the pointwise bounds from Theorem~\ref{thm:1}, 
    \begin{align*}
        |(\mathbf{E}_{\mathrm{dif}})_{\alpha,i}|
        &\leq \frac{\kappa_{\mathrm{dif}}}{n^3} + \varepsilon \, \|\mathbf{L}_{:,i}\|_1,\\[6pt]
        |(\mathbf{E}_{\mathrm{int}})_{\beta,i}|
        &\leq \frac{\kappa_{\mathrm{int}}}{n^4} + \varepsilon C_{\beta} \, \|\mathbf{J}_{:,i}\|_1,
    \end{align*}
    we then have the spectral norm bounds
    \begin{align}
        \label{eq:errbnd_F2_dif}
       \| \mathbf{E}_{\mathrm{dif}}\|_2 
       &\le \| \mathbf{E}_{\mathrm{dif}} \|_F 
        \le \sqrt{M(n+1)} \left( \frac{\kappa_{\mathrm{dif}}}{n^3} + \varepsilon \, \|\mathbf{L}\|_\infty \right), \\[6pt]
        \label{eq:errbnd_F2_int}
       \|\mathbf{E}_{\mathrm{int}}\|_2 
       &\le \|\mathbf{E}_{\mathrm{int}}\|_F 
        \le \sqrt{N(n+1)} \left( \frac{\kappa_{\mathrm{int}}}{n^4} + \varepsilon \, \max_\beta C_\beta \, \|\mathbf{J}\|_\infty \right).
    \end{align}
\end{corollary}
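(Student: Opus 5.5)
The plan is to obtain both chains of inequalities from two standard norm facts and the entrywise bounds of Theorem~\ref{thm:1}. First, for any real matrix $\mathbf{A}$ we have $\|\mathbf{A}\|_2 \le \|\mathbf{A}\|_F$, since $\|\mathbf{A}\|_2^2$ is the largest eigenvalue of $\mathbf{A}^T\mathbf{A}$, while $\|\mathbf{A}\|_F^2$ is the trace, i.e.\ the sum of all (nonnegative) eigenvalues. This gives the first inequality in \eqref{eq:errbnd_F2_dif} and in \eqref{eq:errbnd_F2_int}. Second, if every entry of an $m\times(n+1)$ matrix is bounded in modulus by a common number $B$, then $\|\mathbf{A}\|_F^2 = \sum_{\alpha,i} |A_{\alpha,i}|^2 \le m(n+1)B^2$. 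Hence $\|\mathbf{A}\|_F \le \sqrt{m(n+1)}\,B$. So everything reduces to producing a single uniform entrywise bound $B$ for each error matrix.

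To get a uniform bound for $\mathbf{E}_{\mathrm{dif}}$, I will remove the dependence on the column index $i$ in \eqref{eq:errbnd_entry_dif}. Since $\|\mathbf{L}_{:,i}\|_1 = \sum_k |L_{k,i}|$ is an absolute column sum of $\mathbf{L}$, I will bound it by the matrix norm of $\mathbf{L}$ as it appears in the statement. Strictly, the maximal absolute column sum is $\|\mathbf{L}\|_1$, and $\|\mathbf{L}\|_\infty$ is the maximal absolute row sum. I will note this and use the quantity that dominates all column sums, denoting it by $\|\mathbf{L}\|_\infty$ as in the statement. By Theorem~\ref{cor:1}, both kinds of norm are bounded by $Cn$, so nothing changes asymptotically. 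The constant $\kappa_{\mathrm{dif}}$ is read as the maximum over $\alpha$ of the species-dependent constants. This gives $B_{\mathrm{dif}} = \kappa_{\mathrm{dif}}/n^3 + \varepsilon\|\mathbf{L}\|_\infty$, and with $m=M$ we obtain \eqref{eq:errbnd_F2_dif}.

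The integral case follows the same route. I take $\kappa_{\mathrm{int}}$ as the maximum over $\beta$, replace $C_\beta$ by $\max_\beta C_\beta$, and bound $\|\mathbf{J}_{:,i}\|_1$ by the column-sum norm of $\mathbf{J}$, written $\|\mathbf{J}\|_\infty$ as in the statement. This yields $B_{\mathrm{int}}$, and with $m=N$ we get \eqref{eq:errbnd_F2_int}.

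The only real subtlety is this bookkeeping between column sums and the $\infty$-norm, together with making the constants uniform over the species index. Once those are settled, the argument is a direct summation.
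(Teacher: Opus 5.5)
Your argument is correct and is the reasoning the paper intends: the appendix gives no separate proof of this corollary, which is meant to follow from $\|\cdot\|_2\le\|\cdot\|_F\le\sqrt{m(n+1)}$ times a uniform entrywise bound, applied to Theorem~\ref{thm:1} with the constants made uniform over $\alpha$, $\beta$ and $i$. You are also right that $\|\mathbf{L}_{:,i}\|_1$ and $\|\mathbf{J}_{:,i}\|_1$ are dominated by the column-sum norms $\|\mathbf{L}\|_1$, $\|\mathbf{J}\|_1$, not by the row-sum norms $\|\mathbf{L}\|_\infty$, $\|\mathbf{J}\|_\infty$ written in the statement (the paper makes this substitution without comment); what you prove is therefore the corrected bound, which has the same asymptotics by Theorem~\ref{cor:1}.
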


Using the estimates \(\|\mathbf{L}\|_\infty \le C\, n\) and \(\|\mathbf{J}\|_\infty \le C\, (t_n-t_0)\) from Theorem~\ref{cor:1}, then for large \(n\) the asymptotic behaviour becomes
\begin{align*}
    \|\mathbf{E}_{\mathrm{dif}}\|_F &= \mathcal{O}\biggl(\frac{1}{n^{5/2}} + \varepsilon\, n^{3/2}\biggr),\\[6pt]
    \|\mathbf{E}_{\mathrm{int}}\|_F &= \mathcal{O}\biggl(\frac{1}{n^{7/2}} + \varepsilon\,n^{1/2}\biggr).
\end{align*}
Thus, in the absence of noise (\(\varepsilon = 0\)), the Frobenius-norm error of the integral formulation decays faster than that of the differential formulation. 
In the presence of noise, the differential errors grow with \(n^{3/2}\), and the integration error remains less sensitive to the number of knots than the differentiation error, growing as \(n^{1/2}\), reflecting the smoothing effect of integration.

\subsubsection{Error bounds on the recovery}

We now study the effect of the errors characterized in the previous subsection on the recovery of the associated least-squares problems. This will lead to explicit  estimates on the resulting coefficient errors and residuals.

Let \(\mathbf{X}, \mathbf{D}\) be the data and dictionary matrix with noisy versions \(\bar{\mathbf{X}}, \bar{\mathbf{D}}\) as in Theorem~\ref{thm:1}. 
Consider the linear systems
\begin{align*}
    \dot{\mathbf{X}} &= \mathbf{C}_{\mathrm{dif}} \mathbf{D}, \quad\ \  \ \mathbf{C}_{\mathrm{dif}} \in \mathbb{R}^{M \times N}, \\
    \mathbf{X} &= \mathbf{C}_{\mathrm{int}} \mathbf{D}_{\mathrm{int}}, \quad \mathbf{D}_{\mathrm{int}}= \int_{t_0}^{t} \mathbf{D}(s) \ ds, \quad \mathbf{C}_{\mathrm{int}} \in \mathbb{R}^{M \times N}.
\end{align*}
with least-squares solutions
\begin{align*}
    \mathbf{C}_{\mathrm{dif}} &= \dot{\mathbf{X}} \mathbf{D}^{+}, \\
    \mathbf{C}_{\mathrm{int}} &= \mathbf{X}\mathbf{D}_{\mathrm{int}}^{+}
\end{align*}
where $\mathbf{D}^{+}, \mathbf{D}_{\mathrm{int}}^{+}$ denote the Moore--Penrose pseudoinverse of $\mathbf{D}, \mathbf{D}_{\mathrm{int}}$, respectively. 
In practice, we only have access to the noisy spline-approximated data $\bar{\mathbf{X}}$ and the noisy dictionary $\bar{\mathbf{D}}(\bar{\mathbf{X}})$, giving the estimates
\begin{align*}
    \bar{\mathbf{C}}_{\mathrm{dif}} &= (\bar{\mathbf{X}}\mathbf{L}) \, \bar{\mathbf{D}}^{+}, \\
    \bar{\mathbf{C}}_{\mathrm{int}} &= \bar{\mathbf{X}} (\bar{\mathbf{D}}\mathbf{J})^{+}.
\end{align*} 
Then, the coefficient errors
\begin{align*}
    \Delta \mathbf{C}_{\mathrm{dif}} = \mathbf{C}_{\mathrm{dif}} - \bar{\mathbf{C}}_{\mathrm{dif}} = \dot{\mathbf{X}}\mathbf{D}^+ - (\bar{\mathbf{X}}\mathbf{L}) \bar{\mathbf{D}}^+, \\[4pt]
    \Delta \mathbf{C}_{\mathrm{int}} = \mathbf{C}_{\mathrm{int}} - \bar{\mathbf{C}}_{\mathrm{int}} = \mathbf{X}\mathbf{D}_{\mathrm{int}}^{+} - \bar{\mathbf{X}} (\bar{\mathbf{D}}\mathbf{J})^{+},
\end{align*}
satisfy the decomposition
\begin{align*}
    \Delta \mathbf{C}_{\mathrm{dif}} = \dot{\mathbf{X}}\underbrace{ (\mathbf{D}^+ - \bar{\mathbf{D}}^+)}_{\text{data error}} + \underbrace{(\dot{\mathbf{X}} - \bar{\mathbf{X}}\mathbf{L})}_{\text{differentiation error}} \, \bar{\mathbf{D}}^+, \\[6pt]
    \Delta \mathbf{C}_{\mathrm{int}} = \underbrace{(\mathbf{X} - \bar{\mathbf{X}})}_{\text{data error}}\mathbf{D}_{\mathrm{int}}^{+} + \bar{\mathbf{X}} \underbrace{( \mathbf{D}_{\mathrm{int}}^{+} - (\bar{\mathbf{D}}\mathbf{J})^{+})}_{\text{integration error}}.
\end{align*}
Consequently, the differential and integral error matrices spectral norm are bounded as
\begin{align*}
    \|\Delta \mathbf{C}_{\mathrm{dif}}\|_2 
    &\le \|\dot{\mathbf{X}}\|_2 \, \|\mathbf{D}^+ - \bar{\mathbf{D}}^+\|_2
      + \|\dot{\mathbf{X}} - \bar{\mathbf{X}}\mathbf{L}\|_2 \, \|\bar{\mathbf{D}}^+\|_2, \\[2mm]
    \|\Delta \mathbf{C}_{\mathrm{int}}\|_2 
    &\le \|\mathbf{X} - \bar{\mathbf{X}}\|_2 \, \|\mathbf{D}_{\mathrm{int}}^+\|_2
      + \|\bar{\mathbf{X}}\|_2 \, \| \mathbf{D}_{\mathrm{int}}^+ - (\bar{\mathbf{D}}\mathbf{J})^+\|_2.
\end{align*}

Theorem~\(3.4\) in \cite{396bf6e1-ef54-3bf6-a49b-862db8404076} tells us that for any matrices \(\mathbf{A},\mathbf{B}\) such that \(\mathbf{B}=\mathbf{A}+\mathbf{E}\) with \(\mathbf{E}\) a small perturbation matrix and with \(\mathrm{rank}(\mathbf{A}) = \mathrm{rank}(\mathbf{B})\),
\[
    \|\mathbf{A}^{+} - \mathbf{B}^{+} \|_{2} \leq \frac{1+\sqrt{5}}{2} \|\mathbf{A}^{+} \|_{2} \| \mathbf{B}^{+} \|_{2} \|\mathbf{E} \|_{2} = \frac{1+\sqrt{5}}{2} \frac{\|\mathbf{E} \|_{2} }{\sigma_{\mathrm{min}}(\mathbf{A}) \sigma_{\mathrm{min}}({\mathbf{B}})}.
\]
This result immediately leads to bounds on $\Delta \mathbf{C}_{\mathrm{dif}}$ and $\Delta \mathbf{C}_{\mathrm{int}}$, respectively, which we summarize in the following theorem.

\begin{theorem}\label{thm:2}
Using the notation introduced above, we have 
\begin{align*}
    \|\Delta \mathbf{C}_{\mathrm{dif}}\|_2 
    &\le \frac{1+\sqrt{5}}{2} \|\boldsymbol{\Delta}_\xi\|_{2} \frac{\sigma_{\mathrm{max}}(\dot{\mathbf{X}})}{\sigma_{\mathrm{min}}(\mathbf{D}) \sigma_{\mathrm{min}}(\bar{\mathbf{D}})} +  \frac{\|\mathbf{E}_{\mathrm{dif}}\|_{2}}{\sigma_{\mathrm{min}}(\bar{\mathbf{D}})},  \\[4pt]
    \|\Delta \mathbf{C}_{\mathrm{int}}\|_2 
    &\le \frac{1+\sqrt{5}}{2} \frac{\|\mathbf{E}_{\mathrm{int}}\|_{2}}{\sigma_{\mathrm{min}}(\mathbf{D}_{\mathrm{int}}) \sigma_{\mathrm{min}}(\bar{\mathbf{D}}\mathbf{J}) } + \frac{\|\boldsymbol{\Xi}\|_{2}}{\sigma_{\mathrm{min}}(\mathbf{D}_{\mathrm{int}})},
\end{align*}
where \(\boldsymbol{\Delta}_\xi = \bar{\mathbf{D}} - \mathbf{D}\).
\end{theorem}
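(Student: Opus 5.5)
The plan is to start from the two error decompositions displayed just before the statement and bound each product term by submultiplicativity of the spectral norm. For each pseudoinverse difference we invoke the perturbation result (Theorem 3.4 of the cited reference), with the equal-rank hypothesis taken as part of the standing assumptions (e.g.\ $\mathbf{D}$, $\bar{\mathbf{D}}$, $\mathbf{D}_{\mathrm{int}}$, $\bar{\mathbf{D}}\mathbf{J}$ all of full row rank $N$, which is what the multi-experiment construction is designed to ensure). Throughout we use the identity $\|\mathbf{A}^+\|_2 = 1/\sigma_{\min}(\mathbf{A})$, where $\sigma_{\min}$ denotes the smallest nonzero singular value.

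For the differential formulation, take the decomposition $\Delta\mathbf{C}_{\mathrm{dif}} = \dot{\mathbf{X}}(\mathbf{D}^+ - \bar{\mathbf{D}}^+) + (\dot{\mathbf{X}}-\bar{\mathbf{X}}\mathbf{L})\bar{\mathbf{D}}^+$. In the first term, use $\|\dot{\mathbf{X}}\|_2 = \sigma_{\max}(\dot{\mathbf{X}})$. Then apply the perturbation bound with $\mathbf{A}=\mathbf{D}$, $\mathbf{B}=\bar{\mathbf{D}}$ and $\mathbf{E}=\boldsymbol{\Delta}_\xi=\bar{\mathbf{D}}-\mathbf{D}$. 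This gives $\tfrac{1+\sqrt5}{2}\|\boldsymbol{\Delta}_\xi\|_2/(\sigma_{\min}(\mathbf{D})\sigma_{\min}(\bar{\mathbf{D}}))$. The second term is, by definition, $\mathbf{E}_{\mathrm{dif}}\bar{\mathbf{D}}^+$. It is therefore bounded by $\|\mathbf{E}_{\mathrm{dif}}\|_2/\sigma_{\min}(\bar{\mathbf{D}})$. Adding the two bounds gives the first inequality.

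For the integral formulation, take the decomposition $\Delta\mathbf{C}_{\mathrm{int}} = (\mathbf{X}-\bar{\mathbf{X}})\mathbf{D}_{\mathrm{int}}^+ + \bar{\mathbf{X}}(\mathbf{D}_{\mathrm{int}}^+ - (\bar{\mathbf{D}}\mathbf{J})^+)$. In the first term, $\mathbf{X}-\bar{\mathbf{X}} = -\boldsymbol{\Xi}$ is the noise matrix. Its norm times $\|\mathbf{D}_{\mathrm{int}}^+\|_2$ gives $\|\boldsymbol{\Xi}\|_2/\sigma_{\min}(\mathbf{D}_{\mathrm{int}})$. In the second term, apply the perturbation bound with $\mathbf{A}=\mathbf{D}_{\mathrm{int}}$ and $\mathbf{B}=\bar{\mathbf{D}}\mathbf{J}$. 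Here $\mathbf{B}-\mathbf{A} = -\mathbf{E}_{\mathrm{int}}$ by the definition in Theorem~\ref{thm:1}, so the factor is $\tfrac{1+\sqrt5}{2}\|\mathbf{E}_{\mathrm{int}}\|_2/(\sigma_{\min}(\mathbf{D}_{\mathrm{int}})\sigma_{\min}(\bar{\mathbf{D}}\mathbf{J}))$.

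The main obstacle is the leftover factor $\|\bar{\mathbf{X}}\|_2$ multiplying the second integral term, which does not appear in the stated bound. It must either be absorbed or normalised away. The first thing I would try is to note that the statement is meant up to this data-scale factor, i.e.\ with the data normalised so that $\|\bar{\mathbf{X}}\|_2\le 1$; state this normalisation explicitly and then conclude. Alternatively, one can carry $\sigma_{\max}(\bar{\mathbf{X}})$ along, symmetric with $\sigma_{\max}(\dot{\mathbf{X}})$ in the differential bound. A secondary point to check is the equal-rank hypothesis: it should hold for $\varepsilon$ and $n$ small enough that $\|\boldsymbol{\Delta}_\xi\|_2<\sigma_{\min}(\mathbf{D})$ and $\|\mathbf{E}_{\mathrm{int}}\|_2<\sigma_{\min}(\mathbf{D}_{\mathrm{int}})$, which follows from Weyl's inequality together with Corollary~\ref{cor:2}.
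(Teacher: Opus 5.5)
Your proposal is the paper's own argument: the same two error decompositions, submultiplicativity of the spectral norm, and Stewart's Theorem~3.4 applied to the pairs $(\mathbf{D},\bar{\mathbf{D}})$ and $(\mathbf{D}_{\mathrm{int}},\bar{\mathbf{D}}\mathbf{J})$, with the equal-rank hypothesis left implicit in the paper, as you note. You are also right about the leftover factor: the paper's own inequality just before the theorem carries $\|\bar{\mathbf{X}}\|_2$, and the statement silently drops it, so what the argument actually proves is the integral bound with $\sigma_{\max}(\bar{\mathbf{X}})$ multiplying the first term. You should state that version rather than impose $\|\bar{\mathbf{X}}\|_2\le 1$, which is an extra hypothesis and not a without-loss-of-generality normalisation, since the polynomial dictionary does not rescale linearly with $\mathbf{X}$ and $\|\bar{\mathbf{X}}\|_2$ generically grows like $\sqrt{n+1}$.
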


Using the bounds in Theorem~\ref{thm:2}, we can derive leading-order estimates for the coefficient errors in the differential and integral formulations. 
First, note that the noise matrices satisfy
\begin{align*}
    |(\boldsymbol{\Delta}_\xi)_{\beta,i}| &\le \varepsilon \, C_\beta, 
    &\Longrightarrow& &
    \|\boldsymbol{\Delta}_\xi\|_2 &\le \|\boldsymbol{\Delta}_\xi\|_F \le \varepsilon \, \sqrt{N n} \, C_{\mathrm{max}}, \quad C_{\mathrm{max}} = \max_{\beta} C_{\beta}\\
    |(\boldsymbol{\Xi})_{\alpha,i}| &\le \varepsilon,
    &\Longrightarrow& &
    \|\boldsymbol{\Xi}\|_2 &\le \|\boldsymbol{\Xi}\|_F \le \varepsilon \, \sqrt{M n}.
\end{align*}
Combining these with the Frobenius-norm bounds for the differential and integral errors~\eqref{eq:errbnd_F2_dif}--\eqref{eq:errbnd_F2_int}, we obtain
\begin{align}
    \|\Delta \mathbf{C}_{\mathrm{dif}}\|_2 
    &= \mathcal{O}\Bigg(
        \frac{\varepsilon \sqrt{N n} \, C_{\max} \sigma_{\mathrm{max}}(\dot{\mathbf{X}})}{\sigma_{\mathrm{min}}(\mathbf{D}) \sigma_{\mathrm{min}}(\bar{\mathbf{D}})} 
        + \frac{n^{-5/2}+ \varepsilon n^{3/2}}{\sigma_{\mathrm{min}}(\bar{\mathbf{D}})}  \Bigg),\\[8pt]
    \|\Delta \mathbf{C}_{\mathrm{int}}\|_2 
    &= \mathcal{O}\Bigg(
        \frac{n^{-7/2} + \varepsilon \sqrt{n}}{\sigma_{\mathrm{min}}(\mathbf{D}_{\mathrm{int}}) \sigma_{\mathrm{min}}(\bar{\mathbf{D}}\mathbf{J})} 
        + \frac{\varepsilon \sqrt{M n}}{\sigma_{\mathrm{min}}(\mathbf{D}_{\mathrm{int}})} 
        \Bigg).
\end{align}

Hence, for large \(n\) and in the presence of noise, and assuming that the singular values
\[
    \sigma_{\mathrm{min}}(\mathbf{D}), \quad \sigma_{\mathrm{min}}(\bar{\mathbf{D}}), \quad \sigma_{\mathrm{min}}(\mathbf{D}_{\mathrm{int}}), \quad \sigma_{\mathrm{min}}(\bar{\mathbf{D}}\mathbf{J}), \quad \sigma_{\mathrm{max}}(\dot{\mathbf{X}}),
\]
are independent of \(n\) and \(\varepsilon\),  the coefficient error grows like
\begin{equation}
    \|\Delta \mathbf{C}_{\mathrm{dif}}\|_2 = \mathcal{O}(\varepsilon n^{3/2}), 
    \quad 
    \|\Delta \mathbf{C}_{\mathrm{int}}\|_2 = \mathcal{O}(\varepsilon \sqrt{n}),
\end{equation}
showing that the recovery with the integral formulation is significantly less sensitive to noise than the differential formulation. 
In the absence of noise (\(\varepsilon = 0\)), the decay rates are \(\mathcal{O}(n^{-5/2})\) for differentiation and \(\mathcal{O}(n^{-7/2})\) for integration.

\subsection{Regularisation}
While the exact coefficient matrix $\mathbf{C}$ underlying the reaction system may be sparse, rank deficiencies in the dictionary, measurement noise, or numerical instabilities may pollute the zero entries in the recovery significantly, leading to a dense estimate. 
In the chemical reaction network setting, a dense coefficient matrix is typically not interpretable or chemically meaningful, so sparsity is preferred. Thus we seek to solve the following regularised optimisation problems
\begin{align*}
    \mathbf{C}_{\mathrm{dif,reg}}  &= \min_{\mathbf{C}} \big\| \widetilde{\mathbf{X}}\widetilde{\mathbf{L}} - \mathbf{C} \widetilde{\mathbf{D}}\big\|_{F}^{2} + \text{regularisation}, \\
    \mathbf{C}_{\mathrm{int,reg}}  &= \min_{\mathbf{C}} \big\| \widetilde{\mathbf{X}}_{0} - \mathbf{C}\widetilde{\mathbf{D}}\widetilde{\mathbf{J}}\big\|_{F}^{2} + \text{regularisation}.
\end{align*}
In practice, we solve for \(\mathbf{C}\) row-wise using a sequentially thresholded least squares (STLS) method~\cite{brunton2016discovering}, which can be regarded as a simple form of \(\ell^0\) regularisation~\cite{zhang2019convergence}. This algorithm proceeds by alternating between standard least squares estimation and coefficient thresholding to enforce sparsity. Specifically, for each row \(\mathbf{C}_{\alpha,:}\) of the coefficient matrix \(\mathbf{C} \in \mathbb{R}^{M \times N}\), the following steps are performed:

\begin{enumerate}
    \item Initialise the support set \( S = \{1, \dots, N\} \), corresponding to all candidate dictionary terms.
    
    \item Solve the least-squares problem restricted to the current support:
    \[
        \mathbf{C}_{\alpha}^{(S)} = \arg\min_{\mathbf{c} \in \mathbb{R}^{|S|}} \left\| \widetilde{\mathbf{x}}_i - \mathbf{c} \mathbf{D}_S \right\|_2^2 \,,
    \]
    where \(\widetilde{\mathbf{x}}_i\) is the \(i\)th row of \(\widetilde{\mathbf{X}}\widetilde{\mathbf{L}}\) (differential formulation) or \(\widetilde{\mathbf{X}}_0\) (integral formulation), and \(\mathbf{D}_S\) denotes the columns of the dictionary matrix  \(\widetilde{\mathbf{D}}\) (differential formulation) or \(\widetilde{\mathbf{D}}\widetilde{\mathbf{J}}\) (integral formulation) indexed by the support set \(S\).
    
    \item Threshold coefficients to promote sparsity by updating the support with the remaining indices that satisfy
    \[
        S \leftarrow \left\{ j \in S : |c_{i,j}| > \tau \right\} \,,
    \]
    where \(\tau > 0\) is a user-defined sparsity threshold.
    
    \item Repeat steps 2 and 3 until the support set \(S\) stabilizes (i.e., does not change between iterations) or a maximum number of iterations is reached.
\end{enumerate}

This procedure effectively eliminates terms with small coefficients and promotes sparse solutions. This is computationally efficient and often sufficient in practice for identifying parsimonious models~\cite{brunton2016discovering}. The result is a sparse estimate \(\mathbf{C}_{\text{stls}}\), which can then be interpreted in terms of active reaction terms or polynomial interactions.

While an error analysis along the lines of Section~\ref{sec:err} for the STLS regularised problem is outside the scope of this work, particularly due to the heuristic nature of the STLS method that we use to enforce sparsity, the unregularised analysis remains informative. The obtained error estimates provide a useful benchmark for evaluating the performance of regularised approaches and for understanding how noise and model complexity (via the number of knots) interact. The favourable properties of the integral formulation suggest that this approach may retain its advantages in regularised settings.

\section{Graph recovery from SINDy-inferred dynamics}\label{sec:graph}
We now build on the results of the previous section to demonstrate how the reaction graph of a chemical reaction network can be reconstructed from the sparse coefficient matrix \( \mathbf{C}_{\mathrm{stls}} \) obtained via sequential thresholded least squares.

Given a sparse dynamical model of the form
\[
    \dot{\mathbf{x}}(t) = \mathbf{C}_{\mathrm{stls}}\,\mathbf{d}(t),
\]
we define its \emph{effective dense representation} as
\[
    \dot{\mathbf{x}}(t) = \mathbf{C}_{\mathrm{eff}}\,\mathbf{d}_{\mathrm{eff}}(t),
\]
where \( \mathbf{C}_{\mathrm{eff}} \in \mathbb{R}^{M \times r} \) is obtained from \( \mathbf{C}_{\mathrm{stls}} \) by discarding all columns whose \(\ell^\infty\)-norm is below \( \tau \), and \( \mathbf{d}_{\mathrm{eff}}(t) \in \mathbb{R}^{r} \) contains the corresponding active components of the dictionary vector \( \mathbf{d}(t) \). Here, \( r \leq N \) denotes the number of active terms retained after this post-processing step.  

This column-based filtering acts as an additional sparsity-enforcing mechanism that isolates the most significant terms which are presumed to correspond to genuine reaction pathways in the underlying CRN. 
The support \(\mathcal{Q}_{\mathrm{source}}\) of \( \mathbf{d}_{\mathrm{eff}}(t) \) within the full complexes set \( \mathcal{Q} \) identifies the indices of the active monomials, each corresponding to a \emph{source complex} in the reaction network. Consequently, the support \(\mathcal{Q}_{\mathrm{source}}\) defines an effective source-complex stoichiometric matrix \( \mathbf{Q}_{\mathrm{eff}} \in \mathbb{R}^{M \times r} \).

Graph recovery then reduces to solving the convex optimisation problem
\begin{equation} \label{eq:kirchhoff-opt-prob}
    \min_{\mathbf{K}} \left\| \mathbf{C}_{\mathrm{eff}} - \mathbf{Q}_{\mathrm{eff}}\mathbf{K} \right\|_{F}^{2},
\end{equation}
subject to the requirement that \( \mathbf{K} \in \mathbb{R}^{r \times r} \) be a \emph{Kirchhoff matrix}, satisfying
\begin{align*}
    \sum_{k=1}^r \mathbf{K}_{k,i} &= 0 \quad &&\text{for all } i, \\
    \mathbf{K}_{i,i} &< 0 \quad &&\text{for all } i, \\
    \mathbf{K}_{i,j} &> 0 \quad &&\text{for all } i \neq j.
\end{align*}
This optimisation enforces mass-balance constraints and yields an admissible reaction-graph structure consistent with the inferred dynamics.

For closed CRMs, the above approach to graph recovery is entirely automated. In this case, the nonzero columns of \( \mathbf{C}_{\mathrm{eff}} \) directly identify all participating complexes, so that \( \mathbf{Q}_{\mathrm{eff}} \) contains a complete representation of the network. Solving problem~\eqref{eq:kirchhoff-opt-prob} therefore recovers the true reaction graph without additional user intervention.

By contrast, open CRNs pose additional challenges because \(\mathcal{Q}_{\mathrm{source}} \subset \mathcal{Q}\) typically omits some participating complexes. Nevertheless, recovery remains feasible for a broad and practically relevant class of open systems, namely those composed of a closed subgraph coupled to one or more peripheral source or sink complexes, which we term \emph{peripheral-source/sink CRNs}.  
An illustrative example is shown in Figure~\ref{fig:special-open-crns}. In such systems, the source complexes appear explicitly in \( \mathbf{C}_{\mathrm{eff}} \), so recovery proceeds identically to the closed-network case when the network is open only through sources. 

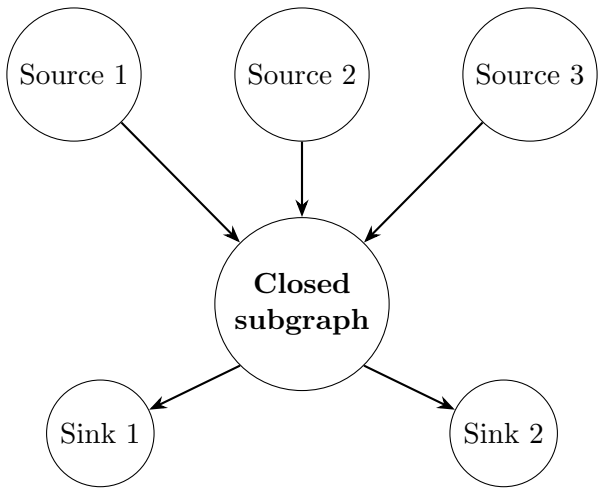
\begin{figure}[H]
    \centering
    \begin{tikzpicture}[
        node/.style={draw, circle, minimum width=0.7cm, align=center},
        source/.style={node},
        sink/.style={node},
        subgraph/.style={draw, circle, minimum width=0.7cm, font=\bfseries, align=center},
        arrow/.style={-{Stealth[scale=1]}, thick}
    ]
    \node[source] (S2) at (0,0) {Source 2};
    \node[source] (S1) at ($(S2) + (-3cm, 0)$) {Source 1};
    \node[source] (S3) at ($(S2) + (3cm, 0)$) {Source 3};

    \node[subgraph, below=1.0cm of S2] (Closed) {Closed\\subgraph};

    \node[sink, below=1.0cm of Closed.west, xshift=-1.5cm] (K1) {Sink 1};
    \node[sink, below=1.0cm of Closed.east, xshift=1.5cm] (K2) {Sink 2};

    \draw[arrow] (S1) -- (Closed.north west);
    \draw[arrow] (S2) -- (Closed.north);
    \draw[arrow] (S3) -- (Closed.north east);

    \draw[arrow] (Closed.south west) -- (K1);
    \draw[arrow] (Closed.south east) -- (K2);
    \end{tikzpicture}
    \caption{Example of an open chemical reaction network composed of a closed subgraph connected to multiple peripheral source and sink complexes. Each source feeds into, and each sink receives output from, the closed portion of the network.}
    \label{fig:special-open-crns}
\end{figure}

A subtlety arises with \emph{sink complexes}, which leave no direct signature in \( \mathbf{C}_{\mathrm{eff}} \). To address this, we introduce a synthetic \emph{zero-complex} serving as a placeholder node that represents a general sink. Operationally, this amounts to appending a zero column to \( \mathbf{Q}_{\mathrm{eff}} \) before solving problem~\eqref{eq:kirchhoff-opt-prob}. This modification enables recovery of outflows from the closed subgraph without explicit identification of each sink complex.

A practical alternative filtering strategy that often yields improved results constructs \( \mathbf{C}_{\mathrm{eff}} \) and \( \mathbf{Q}_{\mathrm{eff}} \) by retaining the \( M \times M \) block of \( \mathbf{C}_{\mathrm{stls}} \) corresponding to individual species, while applying the \(\ell^\infty\)-based thresholding only to columns representing nonlinear complexes.   
More generally, other filtering strategies, possibly incorporating expert knowledge or heuristic criteria, can be developed to tailor the construction of \( \mathbf{C}_{\mathrm{eff}} \) and \( \mathbf{Q}_{\mathrm{eff}} \) for the optimisation problem~\eqref{eq:kirchhoff-opt-prob}.

In general, this graph-recovery framework enables flexible yet minimally supervised reconstruction of open chemical reaction graphs, while preserving the fully automated mechanism discovery achievable for closed networks. In the following section, we evaluate the performance of the proposed framework through numerical experiments on representative chemical systems, designed to assess its ability to recover both kinetic laws and reaction-graph structures from concentration time-series data under noise-free and noisy conditions.

\begin{algorithm}[H]
\caption{Data-driven recovery of chemical reaction networks}
\label{alg:crn-recovery}
\Input{Multi-experiment concentrations time-series matrix \(\mathbf{X} \in \mathbb{R}^{M\times w(n+1)}\)}
\Output{Coefficient matrix \(\mathbf{C} \in \mathbb{R}^{M\times N}\), Kirchhoff matrix \(\mathbf{K} \in \mathbb{R}^{r\times r}\)}
Compute dictionary matrix \(\mathbf{D} = \mathbf{D}(\mathbf{X})\) using polynomial combinations of rows of \(\mathbf{X}\)\;
Construct \(\mathbf{X}_{\text{IVP}} \in \mathbb{R}^{M \times (n+1)}\), the initial value matrix of the system \;
Construct integration and differentiation operator matrices \(\mathbf{L}, \mathbf{J} \in \mathbb{R}^{(n+1) \times (n+1)}\) via cubic spline interpolation of canonical basis \;
Solve the optimisation problems:
\begin{align*}
    \mathbf{C}_{\mathrm{dif,stls}}  &= \arg\min_{\mathbf{C}} \big\| \widetilde{\mathbf{X}}\widetilde{\mathbf{L}} - \mathbf{C} \widetilde{\mathbf{D}}\big\|_{F}^{2} + \text{regularisation}, \\
    \mathbf{C}_{\mathrm{int,stls}}  &= \arg\min_{\mathbf{C}} \big\| \widetilde{\mathbf{X}}_{0} - \mathbf{C}\widetilde{\mathbf{D}}\widetilde{\mathbf{J}}\big\|_{F}^{2} + \text{regularisation},
\end{align*}
using sequentially thresholded least squares (STLS) with threshold parameter \(\tau\)\;
Construct effective coefficient matrix \(\mathbf{C}_{\mathrm{eff}} \in \mathbb{R}^{M \times r}\) by removing columns of \(\mathbf{C}_{\mathrm{stls}}\) whose \(\ell^\infty\)-norm is less than \(\tau\)\;
Construct effective complexes stoichiometry matrix \(\mathbf{Q}_{\mathrm{eff}} \in \mathbb{R}^{r \times r}\) using the column support of \(\mathbf{C}_{\mathrm{eff}}\) in \(\mathbf{C}_{\mathrm{stls}}\)\;
Solve the constrained optimisation problem:
\begin{equation*}
    \min_{\mathbf{K}} \left\| \mathbf{C}_{\mathrm{eff}} - \mathbf{Q}_{\mathrm{eff}}\mathbf{K} \right\|_{F}^{2}
\end{equation*}
subject to:
\begin{align*}
    \sum_{k=1}^r \mathbf{K}_{k,i} &= 0 && \text{for all } i, \\
    \mathbf{K}_{i,i} &< 0 && \text{for all } i, \\
    \mathbf{K}_{i,j} &> 0 && \text{for all } i \neq j.
\end{align*}
\end{algorithm}

\section{Numerical experiments}\label{sec:numex}
To evaluate the performance of the proposed CRN recovery framework, we apply it to four benchmark chemical reaction network systems drawn from the literature. The objectives of this section are twofold: (i) to demonstrate the accuracy of the inferred network structures, and (ii) to identify practical considerations relevant to the implementation of the procedure. The Python code and data for reproducing all experiments can be found on 
\begin{center}
\url{https://github.com/nla-group/ChemSINDy/}.
\end{center}

Two of the selected CRNs model organic catalytic processes and are taken from the catalogue in~\cite{bures2023organic}. To broaden the evaluation, we additionally include the Van de Vusse system from \cite{burnham2007identifying} and a gene transcription network from \cite{szederkenyi2011inference}, thereby covering a diverse range of kinetic regimes and network topologies. We refer to each model using the naming conventions adopted in the corresponding references.

We implemented the data-driven differentiation and integration procedures described in the previous sections in Python. The general outline of our algorithm is presented in Algorithm~\ref{alg:crn-recovery}. 
In our implementation, the data matrix \(\mathbf{X}\) is synthetically constructed. For every realisation of Algorithm~\ref{alg:crn-recovery} this is done as follows: we generate a random set of kinetic constants \(\{k_l : k_{\min} \leq k_l \leq k_{\max}\}\), drawn from a uniform distribution in the interval \([k_{\min} ,k_{\max}]\), and then we numerically integrate the corresponding ODE system
\[
    \dot{\mathbf{x}}(t) = \mathbf{F}(\mathbf{x}(t)) 
\]
over the interval \([0,t_n]\) for a set of \(w\) distinct, randomly generated, initial conditions. 

\subsection{Mechanism M1}
We begin by analysing the M1 mechanism \cite{bures2023organic}, also known as the reversible Michaelis--Menten reaction. The graph of this CRN is shown in Figure~\ref{fig:M1-Graph}.

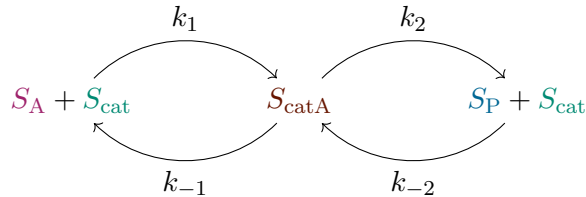
\begin{figure}[H]
    \centering
    \begin{tikzpicture}
        \node (A) at (0,0) {$\sA+\scat$};
        \node (B) at (3,0) {$\scatA$};
        \node (C) at (6,0) {$\sP+\scat$};
        
        \draw[->] (A) to[out=45,in=135]  node[above] {$k_{1}$} (B);
        \draw[<-] (A) to[out=-45,in=-135]  node[below] {$k_{-1}$} (B);
        
        \draw[->] (B) to[out=45,in=135]   node[above] {$k_{2}$} (C);
        \draw[<-] (B) to[out=-45,in=-135]   node[below] {$k_{-2}$} (C);
    \end{tikzpicture}
    \caption{Graph of the M1 mechanism}
    \label{fig:M1-Graph}
\end{figure}
The M1 mechanism comprises a set of \(4\) chemical species, \(\{\sA, \sP, \scat, \scatA\}\), whose interactions form a closed graph. As discussed previously, the graph recovery process for closed networks is straightforward, with the primary challenge being to ensure that the data collected across experiments leads to a dictionary matrix with full row rank. Numerically, we found this to be achieved by stacking \(6\) or more experiments with different initial conditions.

In all the numerical experiments of Model M1 that follow, the training data matrix \(\mathbf{X}\) used for each realisation of our procedure, Algorithm~\ref{alg:crn-recovery}, is obtained as follows. We first set the kinetic constants \(\{k_1, k_2, k_{-1}, k_{-2}\}\) by sampling each uniformly from the interval \([5 \times 10^{-2}, 1.0]\). This fixes a coefficient matrix \(\mathbf{C}_\mathrm{ex}\). Next, we generate \(w = 6\) initial condition vectors 
\[
    \mathbf{x}^{(i)}(0) = \left(\xA^{(i)}(0), \xP^{(i)}(0), \xcat^{(i)}(0), \xcatA^{(i)}(0)\right),
\]
for \(i = 1, 2, \ldots, w\), with each component drawn independently from a uniform distribution on \([0, 1]\). Finally, we numerically integrate the dynamical system defined by \(\mathbf{C}_\mathrm{ex}\) over the time interval \([0, 20]\), discretised into uniformly spaced time points, for each configuration of initial conditions \(\mathbf{x}^{(i)}(0)\).

\subsubsection{Model recovery analysis}
We begin by analysing how the errors 
\begin{align*}
    \|\Delta\mathbf{C}_{\mathrm{int,stls}}\|_{2} &= \|\mathbf{C}_{\mathrm{int,stls}} - \mathbf{C}_{\mathrm{ex}} \|_{2} \,, \ \quad \|\Delta\mathbf{C}_{\mathrm{int,ls}}\|_{2} = \|\mathbf{C}_{\mathrm{int,ls}} - \mathbf{C}_{\mathrm{ex}} \|_{2} ,
    \\ 
    \|\Delta\mathbf{C}_{\mathrm{dif,stls}}\|_{2} &= \|\mathbf{C}_{\mathrm{dif,stls}} - \mathbf{C}_{\mathrm{ex}} \|_{2} \,, \ \quad \|\Delta\mathbf{C}_{\mathrm{dif,ls}}\|_{2} = \|\mathbf{C}_{\mathrm{dif,ls}} - \mathbf{C}_{\mathrm{ex}} \|_{2} 
\end{align*}
of the integral and differential formulations behave as we vary the the number of sample points of the fixed simulation interval \([0,20]\) from 50 to 1000 in increments of 50. 
This entire procedure is repeated 100 times, each with independently sampled kinetic constants and initial conditions. The reconstruction errors from all 100 trials are shown in Figure~\ref{fig:M1_errbnd_noiseoff}, where faint lines represent individual trials and bold lines indicate the average error across repetitions. We compare the observed average errors with the theoretical decay rates established in Theorem~\ref{thm:2}. In our experiments, the errors from both formulations decay faster with respect to the number of time points than the rate predicted by theory. Nevertheless, the observation that the integration-based formulation achieves lower error and faster convergence is consistently confirmed by the numerical results.

\begin{figure}[h]
    \centering
    \includegraphics[width=\textwidth]{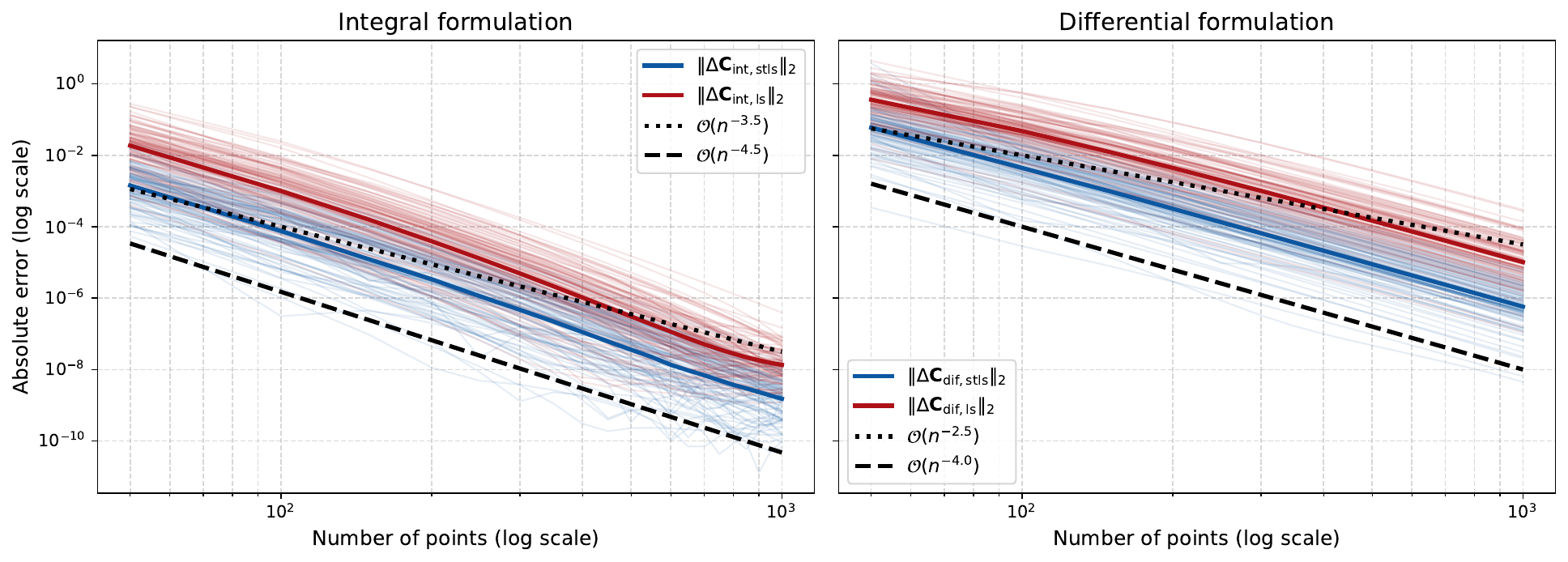}
    \caption{
    M1 reconstruction error for integration-based and differentiation-based recovery methods across increasing number of time points. Each faint line corresponds to one of 100 independent trials; bold lines represent the geometric mean of all realisations. In each subplot, we also include two dashed reference lines to assess the decay rate with respect to the number of time points. One line follows the theoretical error bound from Theorem~\ref{thm:2} and the other was fitted to the observed numerical decay rate. }
    \label{fig:M1_errbnd_noiseoff}
\end{figure}

In Figure~\ref{fig:support-mismatch} we assess the structural accuracy of the recovered coefficient matrices by quantifying their support mismatch. This metric is defined as the number of entries that differ between the recovered matrix and its ground truth, specifically, the sum of missing true nonzero entries and falsely added ones, and provides a direct measure of structural recovery performance. 

To estimate this metric statistically, we conduct 1000 independent trials in four temporal resolutions, 25, 50, 75, and 100 uniformly spaced time-points. For each trial, the synthetic data is constructed as before, at the corresponding temporal resolution, and this is used to compute matrices \(\mathbf{C}_{\mathrm{int,stls}}, \mathbf{C}_{\mathrm{dif,stls}}\). We then measure the support mismatch between each of these and the ground truth coefficient matrix \(\mathbf{C}_{\mathrm{ex}}\).

The results of these experiments are presented in Figure~\ref{fig:support-mismatch}, which compares the distribution of support mismatches across all 1000 trials for both formulations and each resolution level. In the presented plots, we restrict the histogram bins to a maximum of 10 to emphasize the more informative low-error region. We find that the integral-based algorithm identifies the correct nonzero coefficients more often than the differential-based version. This is particularly true when the number of time points is small.

We also show in Figure~\ref{fig:support-mismatch} the support mismatch in the identified Kirchhoff matrices $\mathbf{K}_{\mathrm{int,stls}}$ and $\mathbf{K}_{\mathrm{dif,stls}}$. Since the size of these matrices varies depend on which columns are selected by Algorithm~\ref{alg:crn-recovery}, we can only include those trials that produced Kirchhoff matrices of the same size as the ground truth. Nevertheless, the bar chart indicates that a good recovery of the coefficient matrices generally leads to a good recovery of the reduced Kirchhoff matrices.

\begin{figure}[H]
    \centering
    \includegraphics[width=0.90\textwidth]{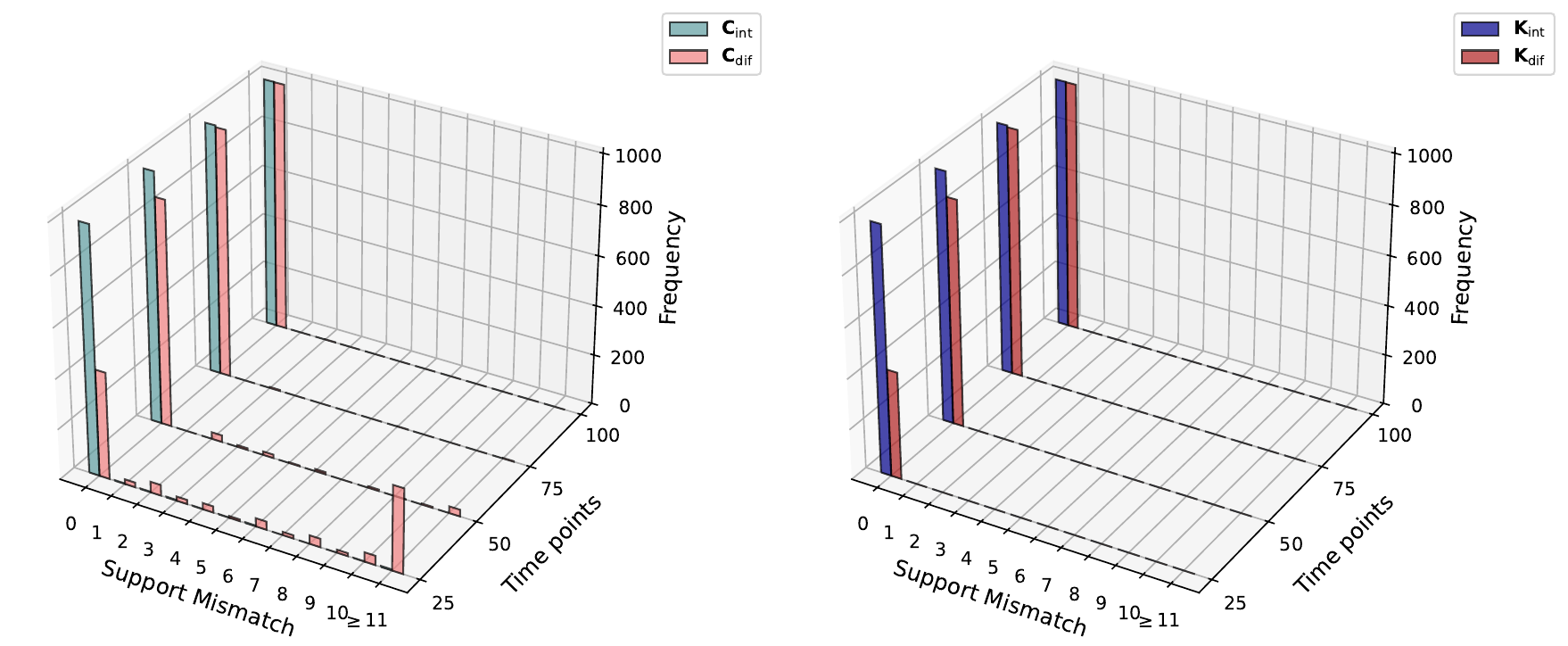}
    \caption{Left: M1 support mismatch between the ground-truth matrix \(\mathbf{C}_{\mathrm{ex}}\) and recovered coefficient matrices over 1000 trials, for both differentiation and integration-based formulations, with varying number of time points. Right: Quality of the recovered Kirchhoff matrices.}
    \label{fig:support-mismatch}
\end{figure}

\subsubsection{Graph recovery analysis}
To illustrate the overall performance of our graph-recovery formalism, we present the discovered graph for a single instance of the algorithm. We fix a set of kinetic constants \(\{k_1, k_2, k_{-1}, k_{-2}\}\), sampled uniformly from the interval \([5 \times 10^{-2}, 1.0]\), and a collection \(S_0\) of six initial conditions. Each initial condition corresponds to the concentrations of the four species \(\sA, \sP, \scat,\) and \(\scatA\), at time \(t_0 = 0\) with each value drawn independently from a uniform distribution on \([0,1]\). Using this fixed system, we generate synthetic data by integrating the associated ODE system over the time interval \([0, 20]\), discretised into \(30\) uniformly spaced time points. We then apply the integration-based recovery procedure to the resulting data, and compare the recovered network to the ground-truth network that generated the dynamics. 
This comparison is shown in Figure~\ref{fig:M1_graph-comparison}. We find that we recover the structure of our ground-truth reaction network.

\begin{figure}[H]
   \hspace*{-8mm}\includegraphics[width=1.1\textwidth]{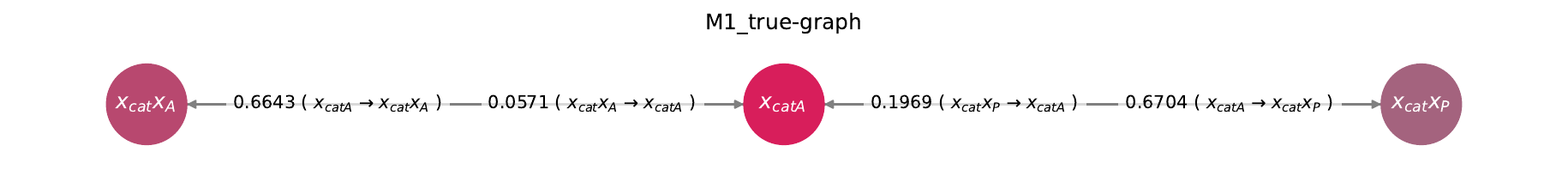}
    \hspace*{-8mm}\includegraphics[width=1.1\textwidth]{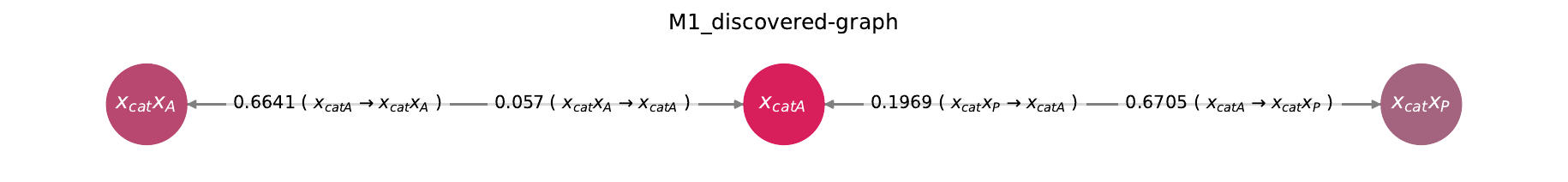}
    \caption{Top: Ground-truth CRN graph corresponding to the fixed ODE system. Bottom: Recovered CRN graph using the integration-based formulation with 30 time points.}
    \label{fig:M1_graph-comparison}
\end{figure}

\subsubsection{Noisy measurements of the M1 mechanism}
To evaluate the robustness of the recovery procedure in the presence of noise, we investigated the behaviour of the errors
\begin{align*}
    \|\Delta\mathbf{C}_{\mathrm{int,stls}}\|_{2} ,\quad
    \|\Delta\mathbf{C}_{\mathrm{dif,stls}}\|_{2} 
\end{align*}
when the data matrix \(\mathbf{X}\) is subjected to additive noise. Specifically, each entry of \(\mathbf{X}\) was independently perturbed by zero-mean Gaussian noise with variance~\(10^{-4}\). The effect of noise on the recovery errors is shown in Figure~\ref{fig:M1_errbnd_noiseon}. Again, the integral formulations leads to recoveries with smaller error, about one order of magnitude better.

\begin{figure}[h]
    \centering
    \includegraphics[width=\textwidth]{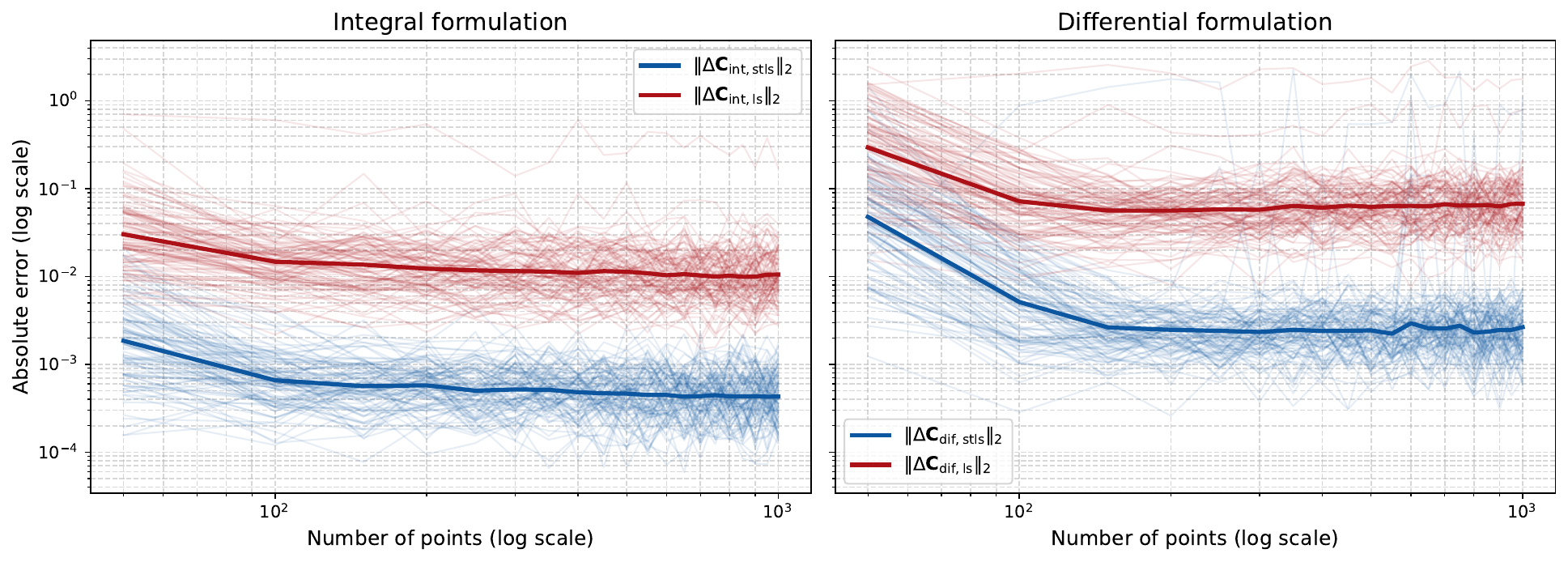}
    \caption{
    Noisy M1 reconstruction error for integration-based and differentiation-based recovery methods across increasing number of time points. Each faint line corresponds to one of 100 independent trials; bold lines represent the geometric mean of all realisations.}
    \label{fig:M1_errbnd_noiseon}
\end{figure}

Taken together, these results demonstrate that the integral formulation yields superior performance compared to the differential approach, achieving lower reconstruction errors and more accurate support recovery.

\subsection{Mechanism M20}
We now turn to the analysis of Mechanism~M20, which extends the M1 reversible Michaelis--Menten mechanism by incorporating irreversible inhibition reactions acting on both the free catalyst and the catalyst-substrate complex.
The M20 mechanism consists of \(6\) chemical species,
\[
    \{\sA, \sP, \scat, \scatA, \scatI, \scatAI\},
\]
and includes both reversible and irreversible reactions. As in M1, the reactions
\[
    \sA + \scat \;\rightleftarrows\; \scatA
    \quad\text{and}\quad
    \scatA \;\rightleftarrows\; \sP + \scat
\]
form a closed subnetwork, while the additional reactions
\[
    \scat \xrightarrow{k_I} \scatI,
    \qquad
    \scatA \xrightarrow{k_{AI}} \scatAI
\]
introduce irreversible loss channels for the catalyst. This structure results in a partially open network, increasing the dimensionality of the coefficient matrix and posing a more challenging recovery problem. 
The graph of M20 is shown in Figure~\ref{fig:M20-Graph}.

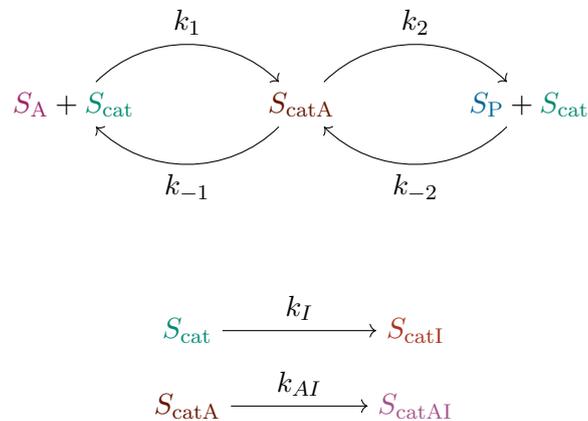
\begin{figure}[H]
    \centering
    \begin{tikzpicture}
        \node (A) at (0,0) {$\sA+\scat$};
        \node (B) at (3,0) {$\scatA$};
        \node (C) at (6,0) {$\sP+\scat$};
        
        \draw[->] (A) to[out=45,in=135]  node[above] {$k_{1}$} (B);
        \draw[<-] (A) to[out=-45,in=-135]  node[below] {$k_{-1}$} (B);
        
        \draw[->] (B) to[out=45,in=135]   node[above] {$k_{2}$} (C);
        \draw[<-] (B) to[out=-45,in=-135]   node[below] {$k_{-2}$} (C);

        \node (D) at (1.5,-3) {$\scat$};
        \node (E) at (4.5,-3) {$\scatI$};
        
        \node (F) at (1.5,-4) {$\scatA$};
        \node (G) at (4.5,-4) {$\scatAI$};
        
        \draw[->] (D) to node[above] {$k_{I}$} (E);
        \draw[->] (F) to node[above] {$k_{AI}$} (G);
    \end{tikzpicture}
    \caption{Graph of the M20 Model}
    \label{fig:M20-Graph}
\end{figure}

As before, accurate recovery requires the dictionary matrix formed from multiple experiments to have full row rank. Due to the increased number of reactions and species, we found numerically that stacking \(w = 8\) experiments with distinct initial conditions was sufficient to reliably satisfy this condition.

\subsubsection{Model recovery analysis}
For the numerical experiments associated with Mechanism~M20, the training data matrix \(\mathbf{X}\) is constructed as follows. The kinetic constants
\[
    \{k_1, k_2, k_{-1}, k_{-2}, k_I, k_{AI}\}
\]
are sampled independently and uniformly from the interval \([5 \times 10^{-2}, 1.0]\), defining the ground-truth coefficient matrix \(\mathbf{C}_{\mathrm{ex}}\). We then generate \(w = 8\) initial condition vectors
\[
    \mathbf{x}^{(i)}(0)
    = \left(\xA^{(i)}(0), \xP^{(i)}(0), \xcat^{(i)}(0), \xcatA^{(i)}(0), \xcatI^{(i)}(0), \xcatAI^{(i)}(0)\right),
\]
for \(i = 1, \dots, w\), with each component drawn independently from a uniform distribution on \([0,1]\). Using these initial conditions, the ODE system defined by \(\mathbf{C}_{\mathrm{ex}}\) is integrated over the interval \([0,20]\), discretised into uniformly spaced time points.
We examine the behaviour of the reconstruction errors
\begin{align*}
    \|\Delta\mathbf{C}_{\mathrm{int,stls}}\|_{2}, \quad
    \|\Delta\mathbf{C}_{\mathrm{int,ls}}\|_{2}, \quad
    \|\Delta\mathbf{C}_{\mathrm{dif,stls}}\|_{2}, \quad
    \|\Delta\mathbf{C}_{\mathrm{dif,ls}}\|_{2},
\end{align*}
as the number of time points used in the simulation increases from \(50\) to \(1000\), in increments of~\(50\).

\begin{figure}[h]
    \centering
    \includegraphics[width=\textwidth]{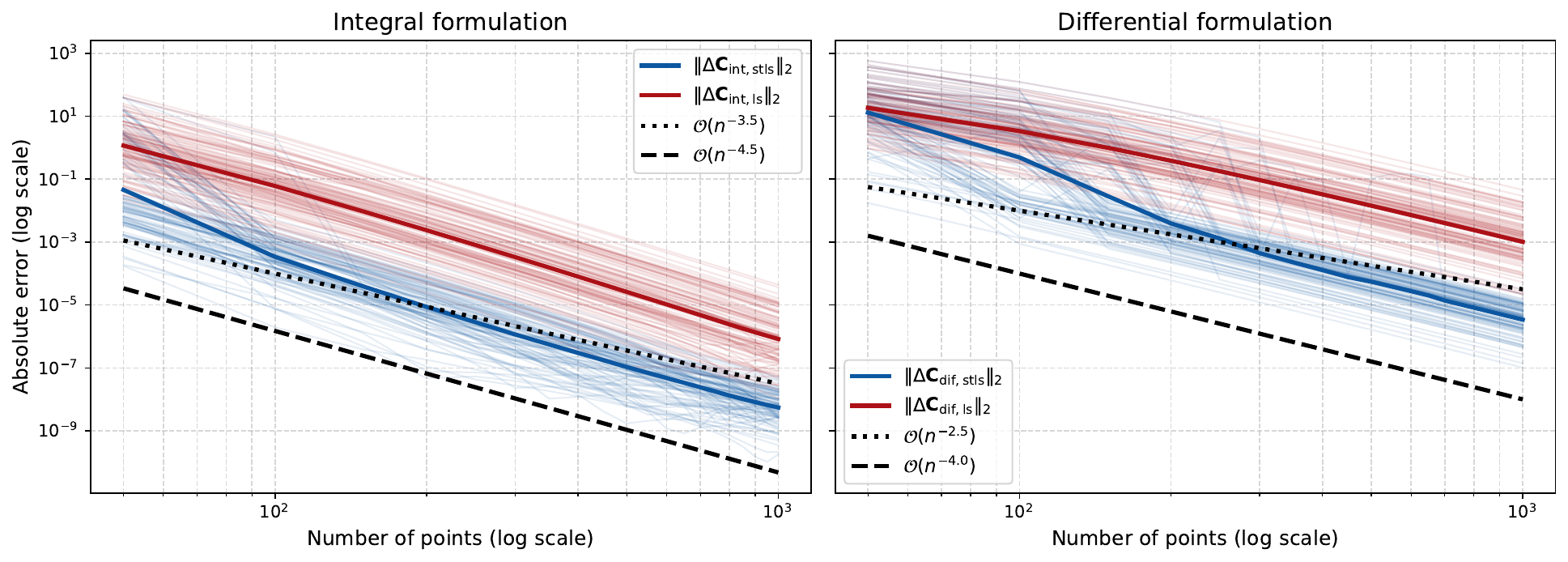}
    \caption{
    M20 reconstruction error for integration-based and differentiation-based recovery methods across increasing numbers of time points. Each faint line corresponds to one of 100 independent trials; bold lines represent the geometric mean of all realisations. In each subplot, we also include two dashed reference lines to assess the decay rate with respect to the number of time points. One line follows the theoretical error bound from Theorem~\ref{thm:2} and the other was fitted to the observed numerical decay rate.}
    \label{fig:M20_errbnd_noiseoff}
\end{figure}

This experiment is repeated for 100 independent realisations, each with newly sampled kinetic constants and initial conditions. Figure~\ref{fig:M20_errbnd_noiseoff} summarises the resulting reconstruction errors. As in the M1 case, the integration-based formulation consistently achieves lower reconstruction error than the differential formulation. While the increased complexity of the M20 network leads to larger absolute errors overall, the observed decay rates remain comparable to, and in some cases exceed, the theoretical predictions.

We further evaluate the structural accuracy of the recovered coefficient matrices using the support mismatch metric, defined as the total number of false positives and false negatives relative to the ground-truth support of \(\mathbf{C}_{\mathrm{ex}}\).
To estimate this metric statistically, we perform 1000 independent trials at four temporal resolutions: 25, 50, 75, and 100 uniformly spaced time points. For each trial, synthetic data is generated using the procedure described above, and the matrices \(\mathbf{C}_{\mathrm{int,stls}}\) and \(\mathbf{C}_{\mathrm{dif,stls}}\) are recovered. The support mismatch relative to \(\mathbf{C}_{\mathrm{ex}}\) is then computed.
The distributions of support mismatches are shown in Figure~\ref{fig:M20_support-mismatch}. As expected, the more complex structure of M20 leads to broader distributions than in M1, particularly for the differentiation-based formulation. Nevertheless, the integration-based approach continues to demonstrate superior structural recovery, with a substantial fraction of trials achieving near-perfect support identification even at relatively coarse temporal resolutions.

\begin{figure}[H]
    \centering
    \includegraphics[width=0.90\textwidth]{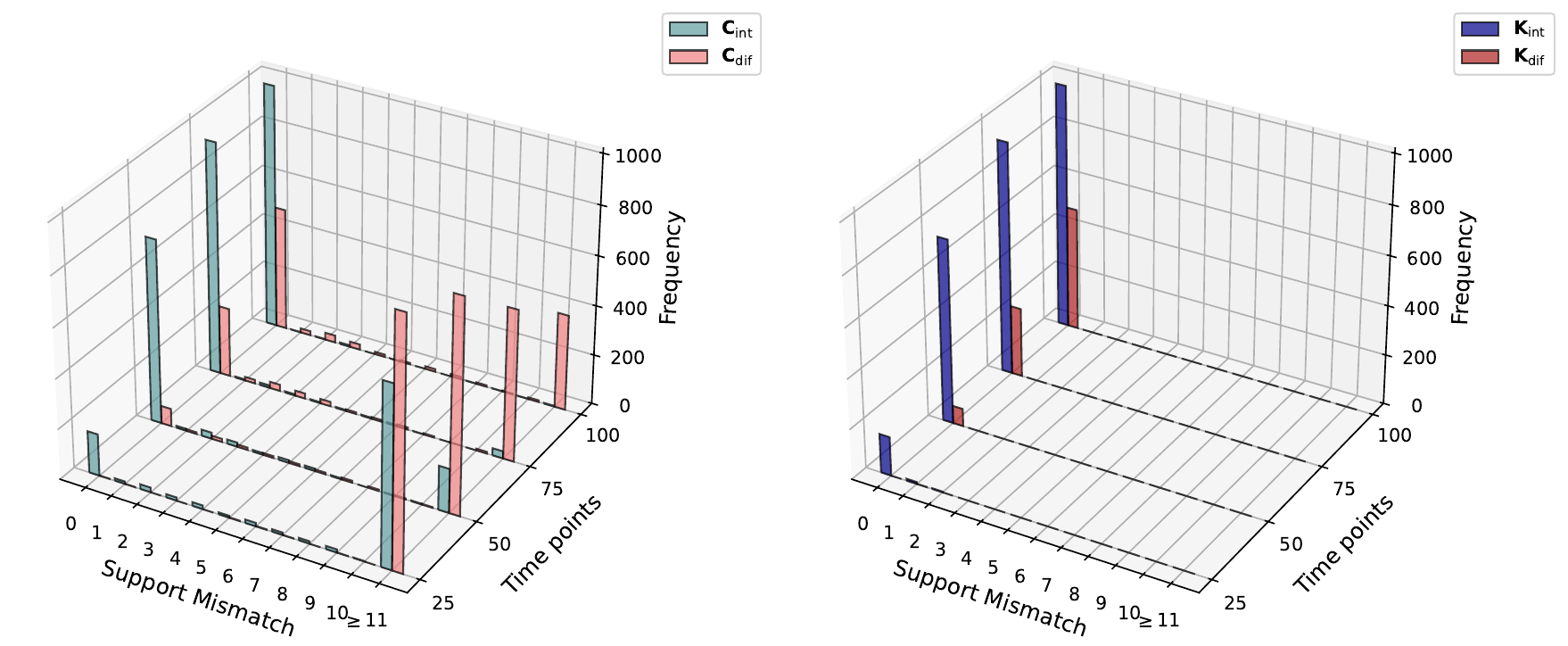}
    \caption{
    Left: M20 support mismatch between the ground-truth coefficient matrix and recovered matrices over 1000 trials, shown for multiple temporal resolutions and for both integration- and differentiation-based formulations. Right: Quality of the recovered Kirchhoff matrices.}
    \label{fig:M20_support-mismatch}
\end{figure}

\subsubsection{Graph recovery analysis}
To illustrate graph recovery for Model~M20, we consider a single representative instance of the algorithm. We fix a set of kinetic constants sampled uniformly from \([5 \times 10^{-2}, 1.0]\) and generate a collection of \(w=8\) initial conditions as described above. Using this fixed system, we integrate the ODEs over the interval \([0,20]\) with \(50\) uniformly spaced time points.

Applying the integration-based recovery procedure to the resulting data yields the recovered CRN graph shown in Figure~\ref{fig:M20_graph-comparison_1}, alongside the ground-truth network. We observe that the recovery procedure yields an incorrect graph. This behaviour stems from the open nature of the M20 model: the ground-truth coefficient matrix \(\mathbf{C}\) contains inactive columns corresponding to the complexes \(\scatI\) and \(\scatAI\). As a result, regardless of the accuracy of the recovered matrix \(\mathbf{C}_{\mathrm{rec}}\), these complexes cannot be identified as active using SINDy-based recovery alone.
\begin{figure}[H]
   \hspace*{-8mm}\includegraphics[width=1.1\textwidth]{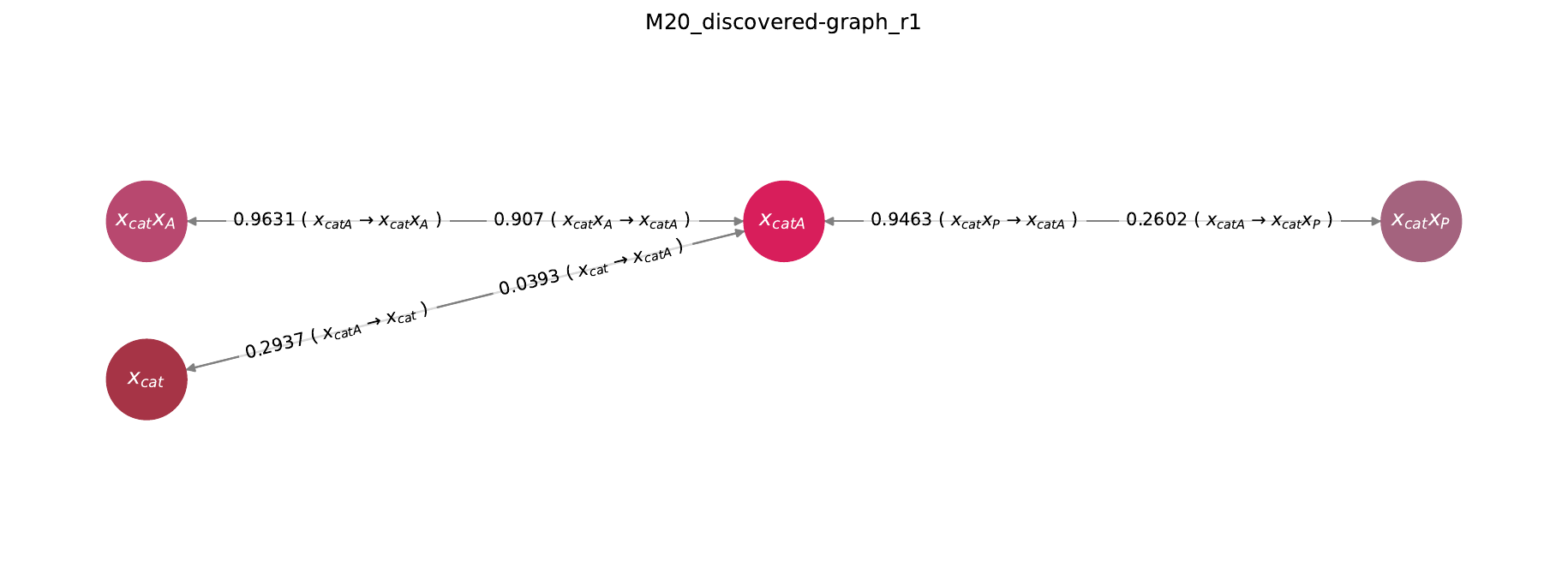}
   \hspace*{-8mm}\includegraphics[width=1.1\textwidth]{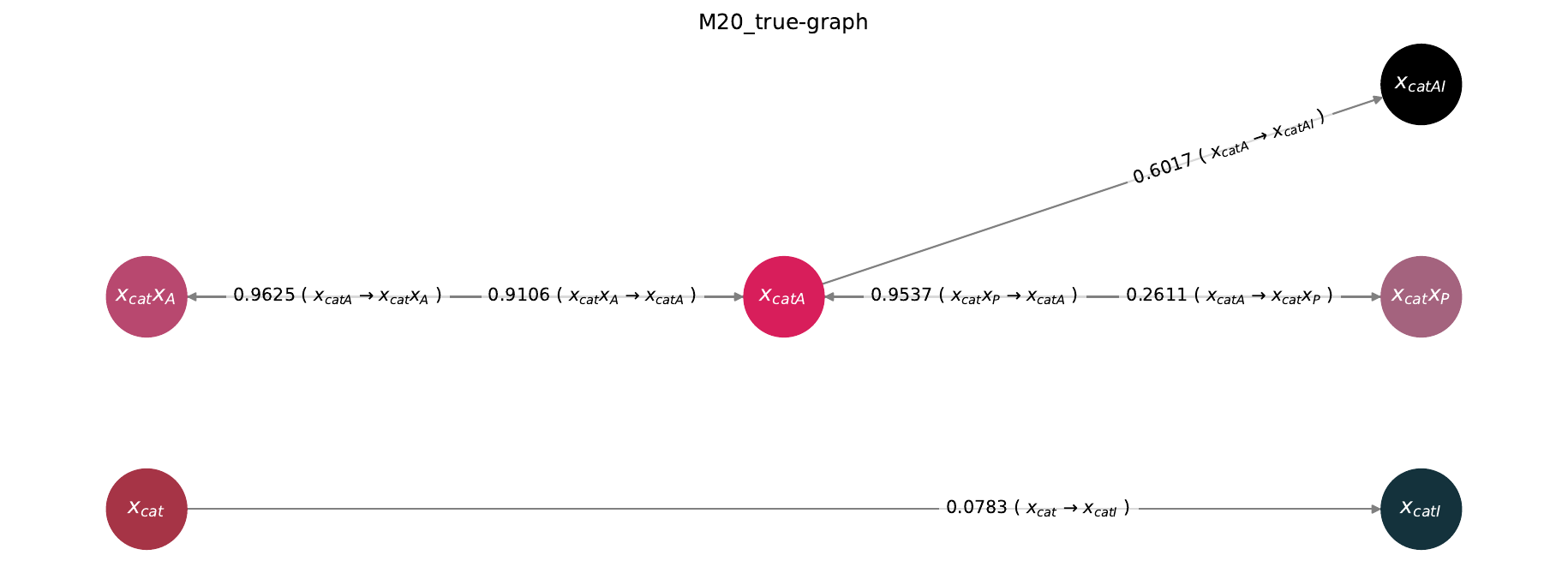}
    \caption{
    Top: Incorrectly recovered CRN graph for M20 obtained using the integration-based formulation with 50 time points. 
    Bottom: Ground-truth CRN graph for the M20 system.}
    \label{fig:M20_graph-comparison_1}
\end{figure}

This limitation can be mitigated by explicitly accounting for mass loss through the introduction of a synthetic zero-complex, which aggregates all irreversible outflows from the system. In practice, this is achieved by appending an additional zero column to the matrix \(\mathbf{Q}\) obtained during the reduction of \(\mathbf{C}_{\mathrm{rec}}\). The resulting recovered graph, incorporating the zero-complex, is shown in Figure~\ref{fig:M20_graph-comparison_2}, where we can see that the zero-complex effectively captures the reactions ending at \(\scatI\) and \(\scatAI\), thus showing that recovery is possible for open systems by adding this zero-complex. 

\begin{figure}[H]
   \hspace*{-8mm}\includegraphics[width=1.1\textwidth]{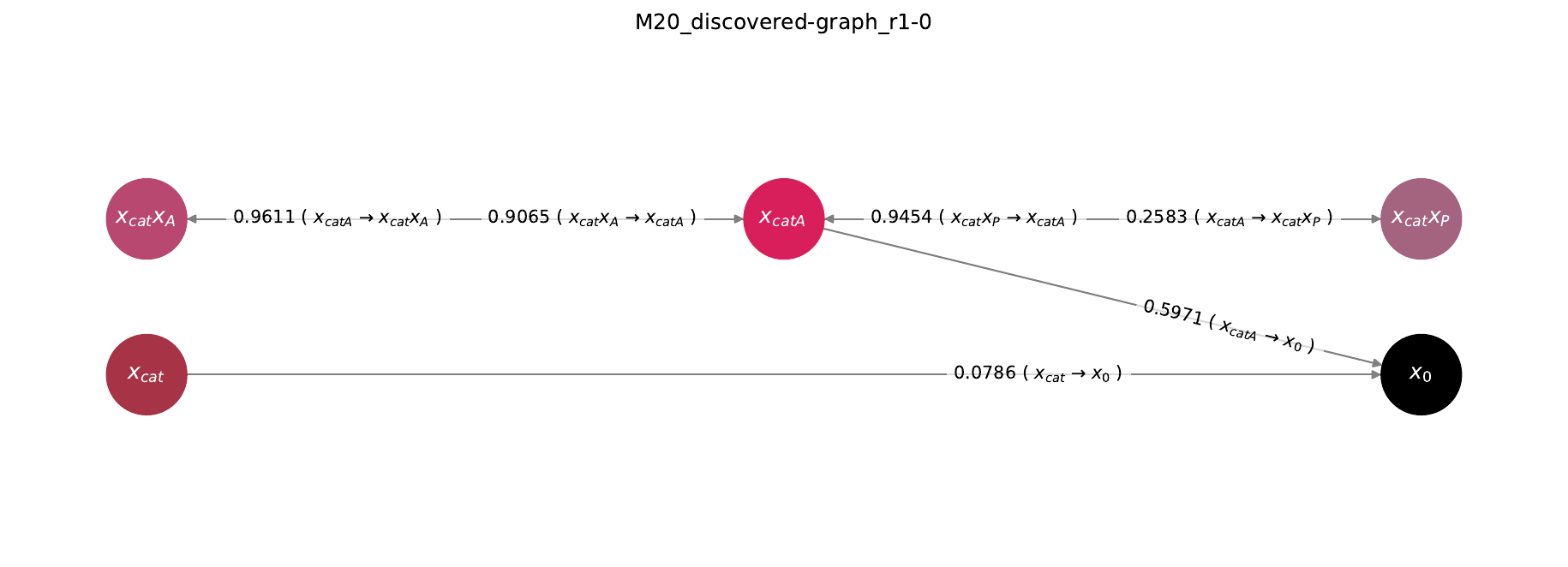}
    \caption{Correctly recovered CRN graph for M20 obtained using the integration-based formulation with 50 time points, after adding a zero-complex \(\mathbf{Q}\).}
    \label{fig:M20_graph-comparison_2}
\end{figure}

For small networks, a different approach to constructing the \(\mathbf{Q}\) matrix for graph recovery is to assume that every species we know of acts as a single-order reactant complex, and just eliminate the rows associated to second-order inactive rows of \(\mathbf{C}_{\mathrm{rec}}\). In this setting, the recovery algorithm correctly reconstructs the full M20 reaction graph, including the irreversible loss pathways. 
The resulting graph is shown in Figure~\ref{fig:M20_graph-comparison_3}, and confirms that the observed structural discrepancy arises solely from the unidentifiability of inactive complexes in the data-driven setting, rather than from a failure of the graph recovery procedure itself. 

\begin{figure}[H]
   \hspace*{-8mm}\includegraphics[width=1.1\textwidth]{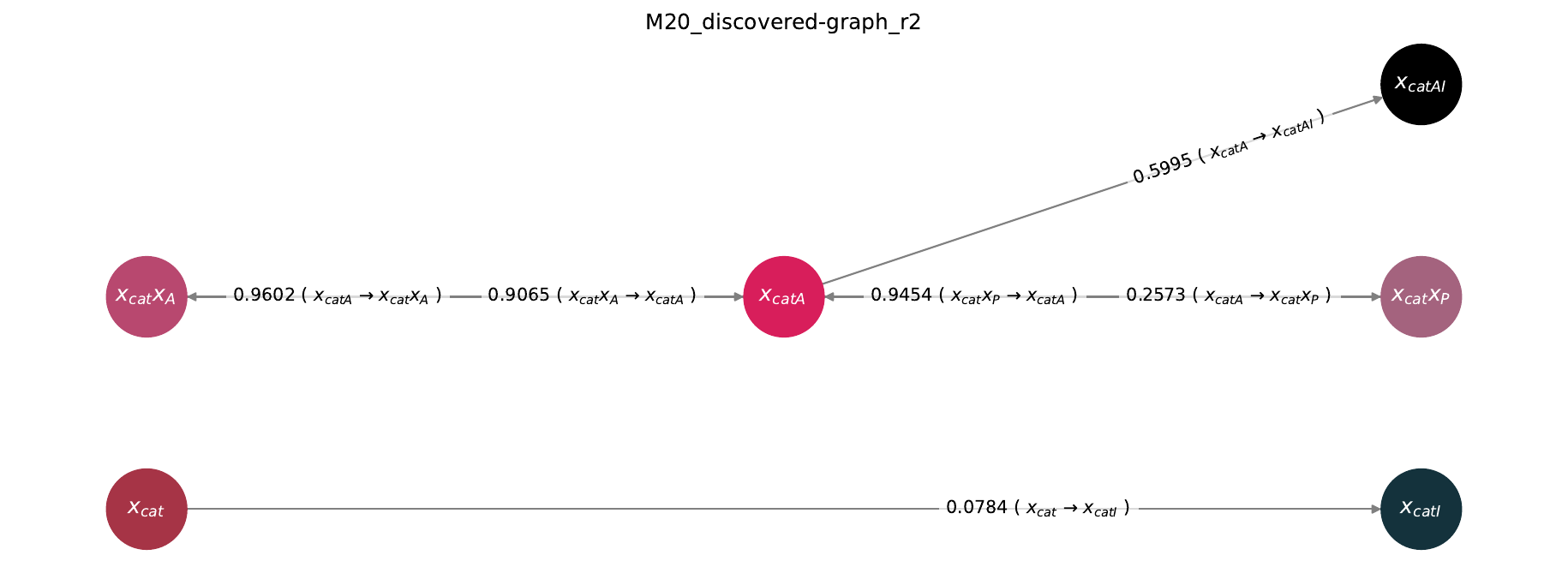}
    \caption{Recovered CRN graph for M20 obtained using the integration-based formulation with 50 time points,  treating all species as single-order active complexes.}
    \label{fig:M20_graph-comparison_3}
\end{figure}

\subsubsection{Noisy measurements of the M20 mechanism}
Finally, we assess the robustness of the recovery procedure for Mechanism~M20 in the presence of measurement noise. As in the M1 experiments, each entry of the data matrix \(\mathbf{X}\) is independently perturbed by zero-mean Gaussian noise with variance \(10^{-4}\). We then examine the resulting reconstruction errors
\[
\|\Delta\mathbf{C}_{\mathrm{int,stls}}\|_{2},
\quad
\|\Delta\mathbf{C}_{\mathrm{dif,stls}}\|_{2}.
\]
The results, shown in Figure~\ref{fig:M20_errbnd_noiseon}, confirm that the integration-based formulation remains markedly more robust to noise than the differential approach. Despite the higher dimensionality and partial openness of the M20 network, the integration-based method consistently achieves lower reconstruction error and improved stability, reinforcing the conclusions drawn from the M1 experiments. 

We note that some of the STLS curves in Figure~\ref{fig:M20_errbnd_noiseon} show a zig-zag type behaviour. This seems to appear in cases where STLS stops too early, leaving two many nonzero coefficients in the least squares problem, and thereby reducing the recovery performance close to that of the unregularized (full) least squares algorithm. This is testament to the inherent difficulty of sparse recovery and it is possible that other algorithms for sparse recovery (like LASSO, OMP) might perform better. Overall, however, STLS is doing a good job given its simplicity, with the correct coefficient matrices recovered in a majority of trials, and on average better than unregularized least squares (as indicated by the lower geometric mean curves in the plots).

\begin{figure}[h!]
    \centering
    \includegraphics[width=\textwidth]{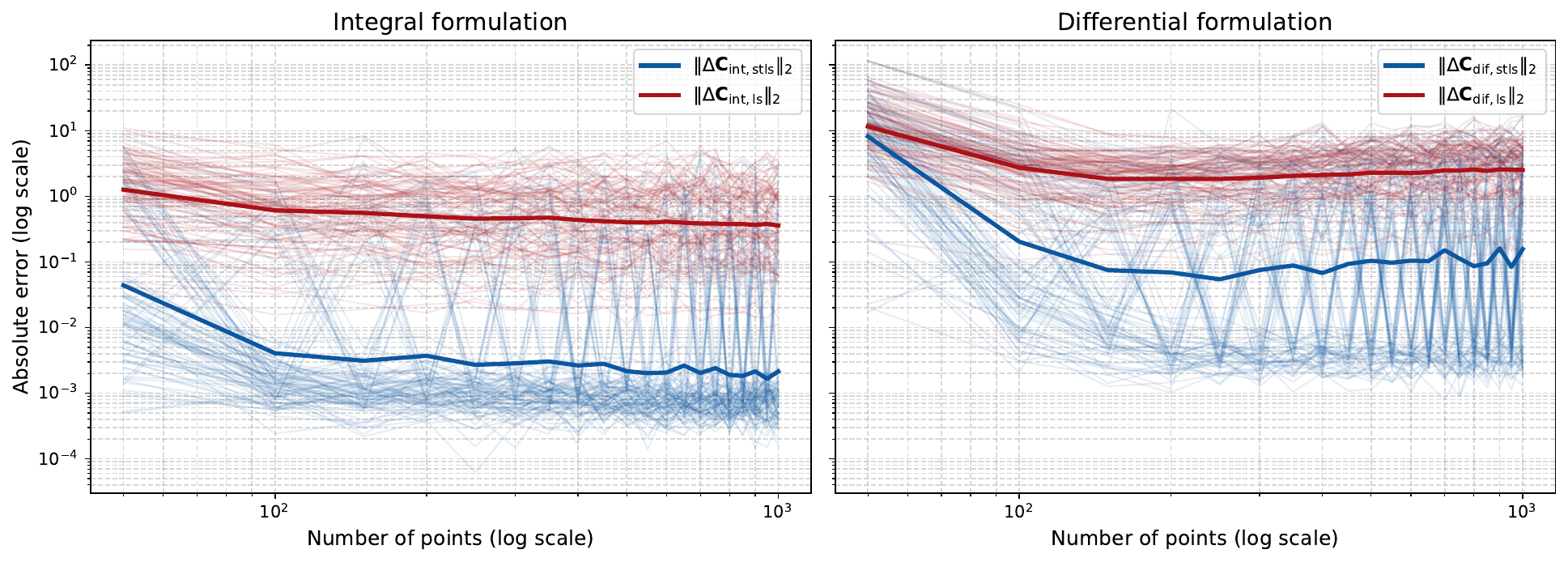}
    \caption{
    Noisy M20 reconstruction error for integration-based and differentiation-based recovery methods across increasing numbers of time points. Each faint line corresponds to one of 100 independent trials; bold lines represent the geometric mean of all realisations.}
    \label{fig:M20_errbnd_noiseon}
\end{figure}

\subsection{Van de Vusse reaction}

We conclude our numerical study with the Van de Vusse reaction, a classical benchmark model in chemical reaction engineering featuring both linear and nonlinear reaction pathways. The reaction mechanism involves \(4\) chemical species $\{x_{1}, x_{2}, x_{3}, x_{4}\}$ 
participating in a sequential conversion and a parallel bimolecular side reaction given by
\[
    2x_{1} \xrightarrow{k_1} x_{2},
    \qquad
    x_{1} \xrightarrow{k_2} x_{3} \xrightarrow{k_3} x_{4}.
\]
This defines an open reaction network due to the irreversible conversion of reactants into final products. 

\subsubsection{Model recovery analysis}
For the numerical experiments associated with the Van de Vusse reaction, the training data matrix~\(\mathbf{X}\) is generated as follows. The kinetic constants are picked from \cite{burnham2007identifying} as
\begin{equation} \label{eq:vdv_ks}
    k_{1} = 10^{-3}, \quad k_{2} = 6.85\times10^{-3}, \quad k_{3} = 2.48\times10^{-3},
\end{equation}
defining the ground-truth coefficient matrix \(\mathbf{C}_{\mathrm{ex}}\). We generate \(w = 4\) initial condition vectors
\[
    \mathbf{x}^{(i)}(0)
    = \left(x_{1}^{(i)}(0), x_{2}^{(i)}(0), x_{3}^{(i)}(0), x_{4}^{(i)}(0)\right),
    \qquad i = 1, \dots, w,
\]
with each component drawn independently from a uniform distribution on \([0,1]\). For each initial condition, the resulting ODE system is integrated over the time interval \([0,20]\), discretised into uniformly spaced time points. 
We analyse the reconstruction errors
\begin{align*}
    \|\Delta\mathbf{C}_{\mathrm{int,stls}}\|_{2}, \quad
    \|\Delta\mathbf{C}_{\mathrm{int,ls}}\|_{2}, \quad
    \|\Delta\mathbf{C}_{\mathrm{dif,stls}}\|_{2}, \quad
    \|\Delta\mathbf{C}_{\mathrm{dif,ls}}\|_{2},
\end{align*}
as the number of time points used in the simulation is increased from \(50\) to \(1000\), in increments of \(50\). The experiment is repeated for 100 independent trials, each with newly sampled kinetic constants and initial conditions. Figure~\ref{fig:vdv_errbnd_noiseoff} summarises the resulting reconstruction errors. As observed for Models~M1 and~M20, the integration-based formulation consistently achieves lower reconstruction error and faster decay rate than the differentiation-based approach. The presence of nonlinear reaction terms leads to moderately increased reconstruction error, particularly for coarse temporal resolutions.

\begin{figure}[h]
    \centering
    \includegraphics[width=\textwidth]{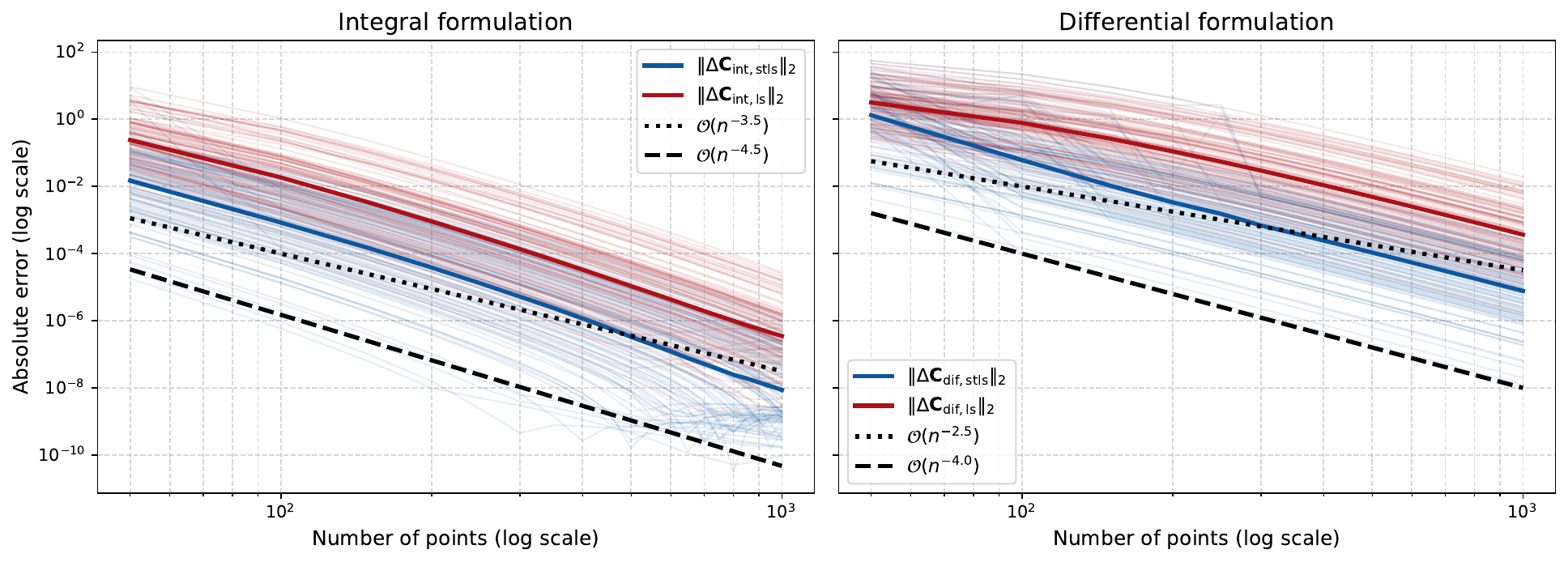}
    \caption{
    Van de Vusse reconstruction error for integration-based and differentiation-based recovery methods across increasing numbers of time points. Each faint line corresponds to one of 100 independent trials; bold lines represent the geometric mean of all realisations. In each subplot, we also include two dashed reference lines to assess the decay rate with respect to the number of time points. One line follows the theoretical error bound from Theorem~\ref{thm:2} and the other was fitted to the observed numerical decay rate.}
    \label{fig:vdv_errbnd_noiseoff}
\end{figure}

We next evaluate the structural accuracy of the recovered coefficient matrices by measuring their support mismatch with respect to the ground-truth matrix \(\mathbf{C}_{\mathrm{ex}}\).  
We perform 1000 independent trials at four temporal resolutions: 25, 50, 75, and 100 uniformly spaced time points. For each trial, synthetic data is generated as described above, and the matrices \(\mathbf{C}_{\mathrm{int,stls}}\) and \(\mathbf{C}_{\mathrm{dif,stls}}\) are computed. The resulting support mismatches are shown in Figure~\ref{fig:vdv_support-mismatch}. As expected, the integration-based formulation yields significantly more accurate support recovery, particularly at lower temporal resolutions.

\begin{figure}[H]
    \centering
    \includegraphics[width=0.9\textwidth]{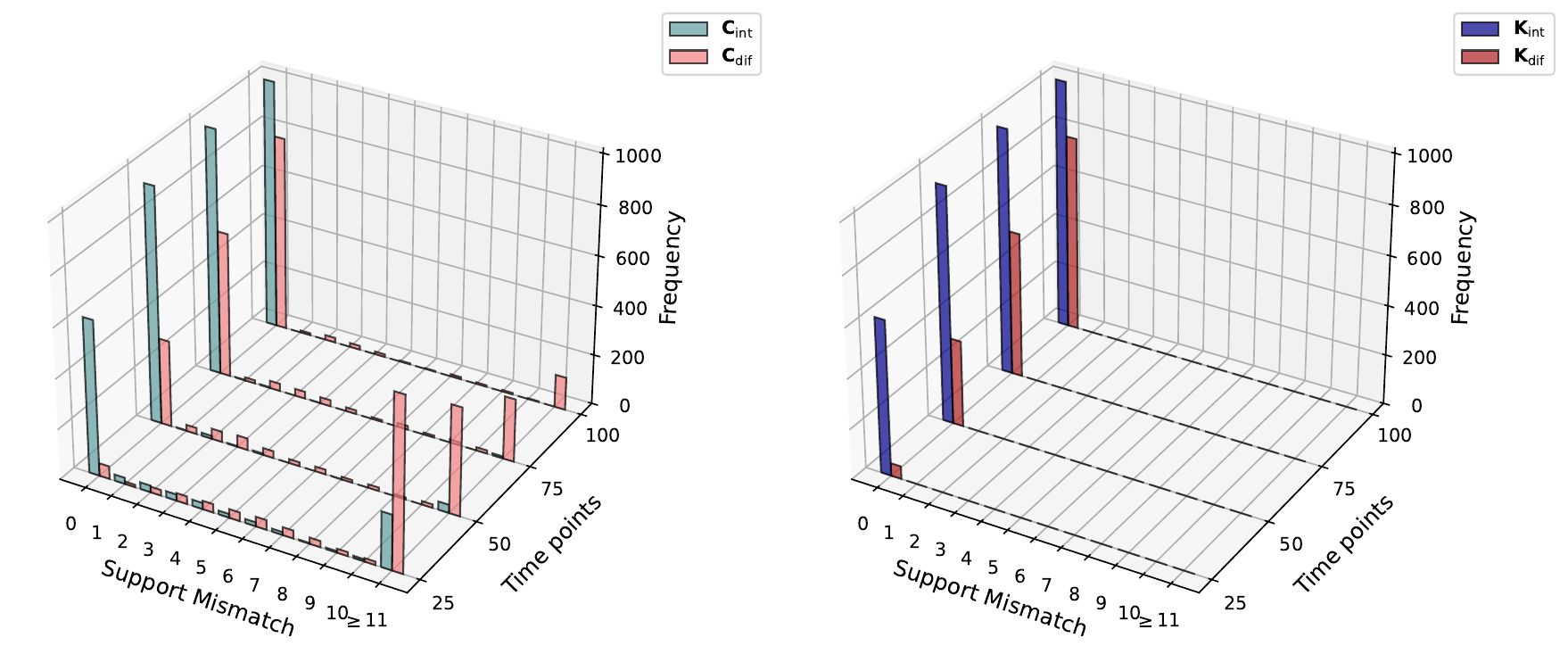}
    \caption{
    Left: Support mismatch between the ground-truth coefficient matrix and recovered matrices for the Van de Vusse reaction over 1000 independent trials, shown for multiple temporal resolutions and both integration- and differentiation-based formulations. Right: Quality of the recovered Kirchhoff matrices.}
    \label{fig:vdv_support-mismatch}
\end{figure}

\subsubsection{Graph recovery analysis}

We illustrate the graph recovery performance of our method on a representative instance of the Van de Vusse reaction. We use our fixed set of kinetic constants~Eq.(\ref{eq:vdv_ks}) and \(w=4\) initial conditions are generated as described above. The system is integrated over the interval \([0,20]\) using \(50\) uniformly spaced time points.

We first attempt the recovery using the complex stochiometry matrix \(\mathbf{Q}\) constructed by removing all columns corresponding to inactive columns of the recovered coefficient matrix. The result is shown on Figure~\ref{fig:vdv_graph-comparison_1}. As was the case for the M20 model, for open networks this produces an incorrect graph.

\begin{figure}[H]
   \hspace*{-8mm}\includegraphics[width=1.1\textwidth]{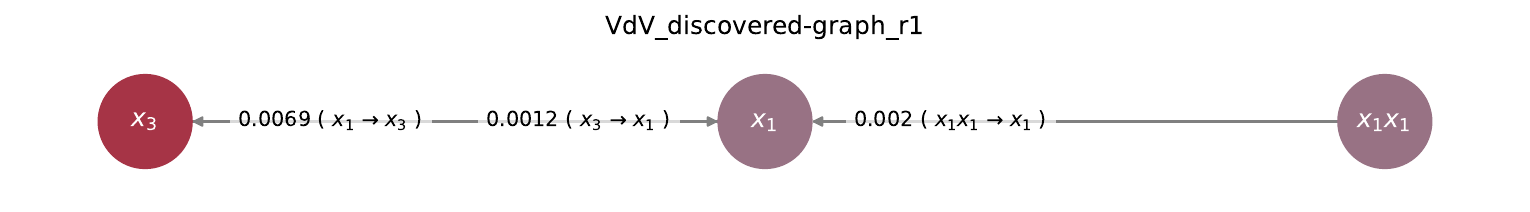}
   \hspace*{-8mm}\includegraphics[width=1.1\textwidth]{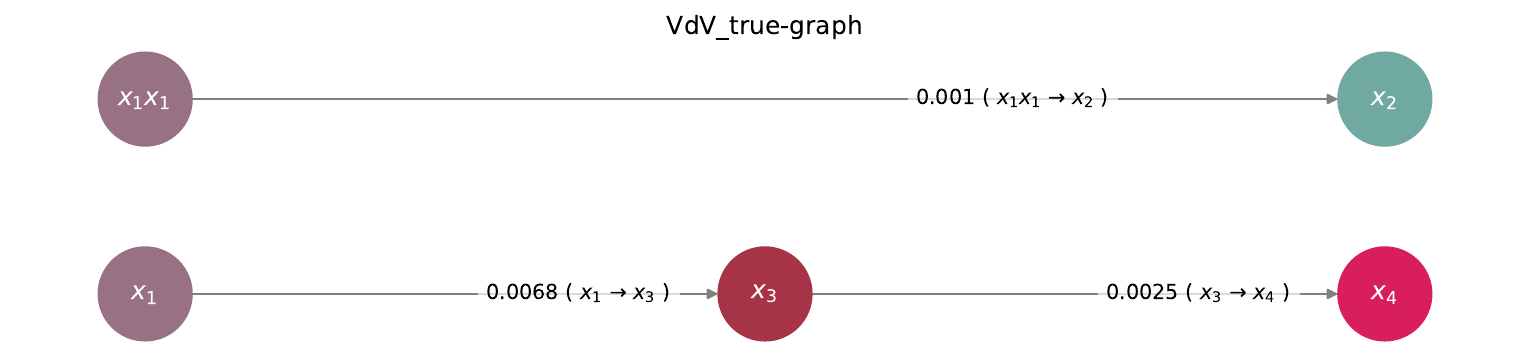}
    \caption{
    Top: Incorrectly recovered CRN graph obtained using the integration-based formulation with 50 time points. 
    Bottom: Ground-truth CRN graph for the Van de Vusse reaction.
    }
    \label{fig:vdv_graph-comparison_1}
\end{figure}

We next perform graph recovery by introducing a zero-complex in addition to the active complexes. The resulting recovered network is shown in Figure~\ref{fig:vdv_graph-comparison_2}. This reconstruction illustrates the phenomenon of chemical equivalence \cite{szederkenyi2011inference}, as the recovered network reproduces the same equations of motion for the complexes \(x_{1}\) and \(x_{3}\) as in the original Van de Vusse system, but the overall recovered structure is different to the original Van de Vusse reaction graph. For this model, we require alternative filtration schemes in constructing the matrices \(\mathbf{C}_{\mathrm{eff}}\) and \(\mathbf{Q}_{\mathrm{eff}}\) for the graph recovery problem. In Figure~\ref{fig:vdv_graph-comparison_2}, we show the recovered structure when we reduce \(\mathbf{C}_{\mathrm{eff}}\) by removing only inactive columns corresponding to quadratic complexes. In this case, we recover the correct structure of the Van de Vusse model.

\begin{figure}[H]
   \hspace*{-8mm}\includegraphics[width=1.1\textwidth]{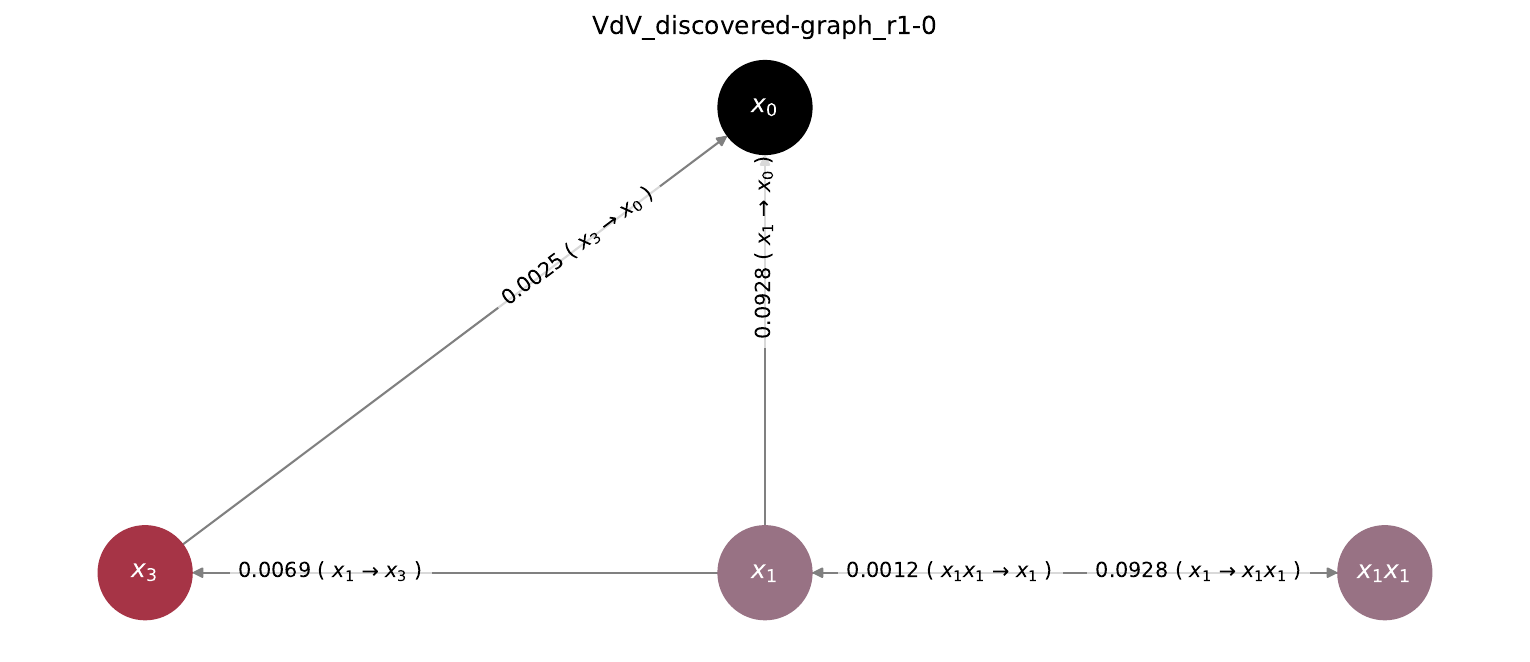}
   \hspace*{-8mm}\includegraphics[width=1.1\textwidth]{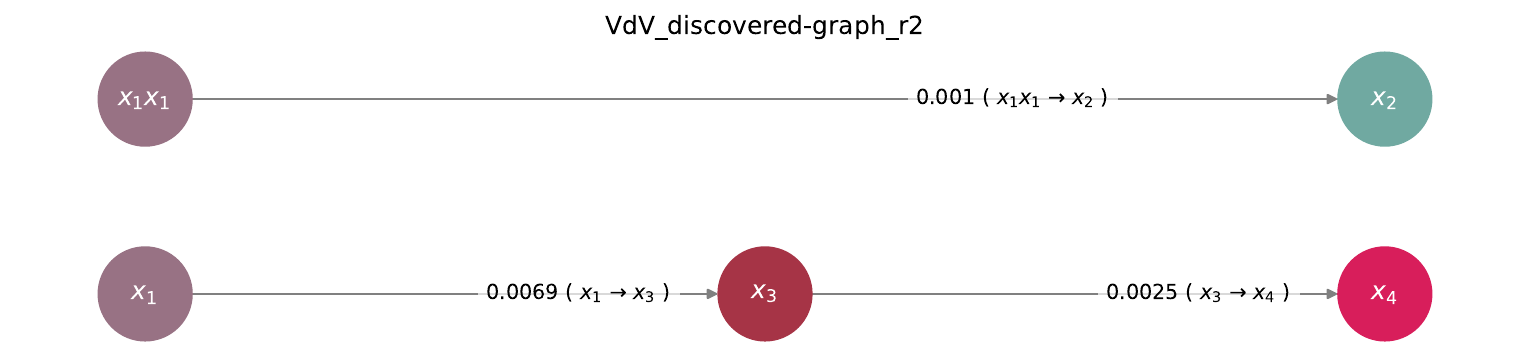}
    \caption{Top: Incorrectly recovered CRN graph obtained using the integration-based formulation with 50 time points after adding a zero-complex. Bottom: Correctly recovered CRN graph obtained using the integration-based formulation with 50 time points and assuming all species as reactant complexes.}
    \label{fig:vdv_graph-comparison_2}
\end{figure}

\subsubsection{Noisy Van de Vusse reaction}

Finally, we assess the robustness of the recovery procedure in the presence of measurement noise. Each entry of the data matrix \(\mathbf{X}\) is independently perturbed by additive Gaussian noise with zero mean and variance \(10^{-4}\). In Figure~\ref{fig:vdv_errbnd_noiseon} we plot the resulting reconstruction errors
\[
    \|\Delta\mathbf{C}_{\mathrm{int,stls}}\|_{2},
    \quad
    \|\Delta\mathbf{C}_{\mathrm{dif,stls}}\|_{2}.
\]

As before, these results demonstrate that the integration-based formulation remains robust and accurate for reaction networks involving nonlinear interactions, such as the Van de Vusse reaction, and further highlight its advantages over differentiation-based approaches in both noise-free and noisy settings.

\begin{figure}[h]
    \centering
    \includegraphics[width=\textwidth]{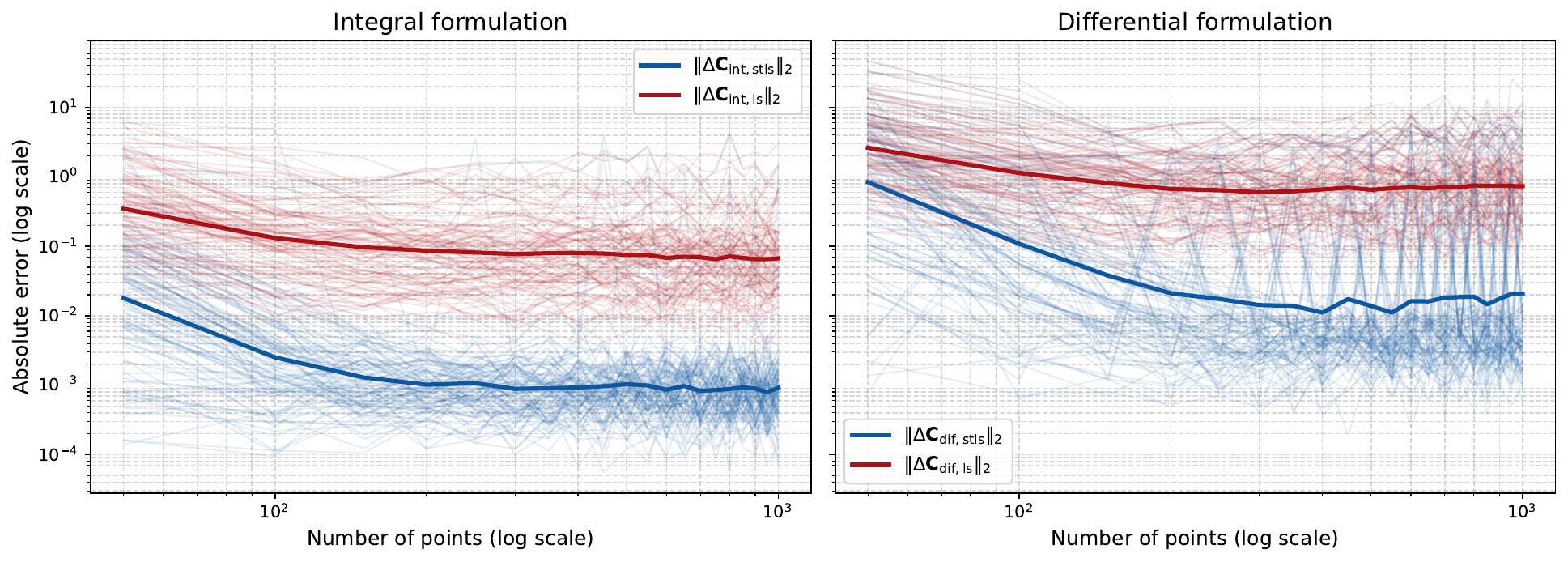}
    \caption{
    Noisy Van de Vusse reconstruction error for integration-based and differentiation-based recovery methods across increasing numbers of time points. Each faint line corresponds to one of 100 independent trials; bold lines represent the geometric mean of all realisations.}
    \label{fig:vdv_errbnd_noiseon}
\end{figure}

\section{Conclusions}\label{sec:concl}

We have proposed a new framework for the full mechanistic reconstruction of chemical reaction networks (CRNs) from concentration data. Theoretical analysis and numerical evidence suggests that recovering differential equations in integral form is superior to using numerical differentiation in terms of accuracy and robustness to noise. We have also introduced a automatic procedure to recover admissible mass-action mechanisms from the equations. 

In future work we hope to extend our framework to the case of incomplete measurements, where concentrations of only a subset of the species are available. This is particularly challenging in cases where not even the number of hidden species is known. Also, in our current implementation we assume that all experiments happen on similar time-scales as we keep the simulation time window and the  integration matrix fixed across simulations. There is no intrinsic reason our framework could not be adapted to the case of  varying time-scales. It would mainly require the use of scale-adapted integration matrices.  

\paragraph{Acknowledgments.}  S.\,G.\ acknowledges
funding from the UK’s Engineering and Physical Sciences Research Council (EPSRC grant EP/Z533786/1) and the Royal Society (RS Industry Fellowship IF/R1/231032).  I.\,L.\ acknowledges funding from the European Research Council (Advanced Grant RuCat 833337). We are grateful to Matthew Colbrook and Stefan Klus for useful discussions on the topic of this paper.

\bibliographystyle{siam}
\bibliography{bibliography}

@book{Espenson1995,
  author    = {James H. Espenson},
  title     = {Chemical Kinetics and Reaction Mechanisms},
  edition   = {2nd},
  publisher = {McGraw–Hill},
  year      = {1995}
}

@book{scherzer,
  title={{Variational Methods in Imaging}},
  author={Scherzer, Otmar and Grasmair, Markus and Grossauer, Harald and Haltmeier, Markus and Lenzen, Frank},
  volume={167},
  year={2009},
  publisher={Springer}
}

@article{blackmond2005reaction,
  title={Reaction progress kinetic analysis: a powerful methodology for mechanistic studies of complex catalytic reactions},
  author={Blackmond, Donna G},
  journal={Angewandte Chemie International Edition},
  volume={44},
  number={28},
  pages={4302--4320},
  year={2005},
  publisher={Wiley Online Library}
}

@article{bures2016variable,
  title={Variable time normalization analysis: general graphical elucidation of reaction orders from concentration profiles},
  author={Bur{\'e}s, Jordi},
  journal={Angewandte Chemie},
  volume={128},
  number={52},
  pages={16318--16321},
  year={2016},
  publisher={Wiley Online Library}
}

@article{bhore1990delplot,
  title={The delplot technique: a new method for reaction pathway analysis},
  author={Bhore, Nazeer A and Klein, Michael T and Bischoff, Kenneth B},
  journal={Industrial \& Engineering Chemistry Research},
  volume={29},
  number={2},
  pages={313--316},
  year={1990},
  publisher={ACS Publications}
}

@article{brunton2016discovering,
  title={Discovering governing equations from data by sparse identification of nonlinear dynamical systems},
  author={Brunton, Steven L and Proctor, Joshua L and Kutz, J Nathan},
  journal={Proceedings of the National Academy of Sciences},
  volume={113},
  number={15},
  pages={3932--3937},
  year={2016},
  publisher={National Acad Sciences}
}

@article{schaeffer2017sparse,
  title={Sparse model selection via integral terms},
  author={Schaeffer, Hayden and McCalla, Scott G},
  journal={Physical Review E},
  volume={96},
  number={2},
  pages={023302},
  year={2017},
  publisher={APS}
}

@article{forootani2023robust,
  title={A robust {SINDy} approach by combining neural networks and an integral form},
  author={Forootani, Ali and Goyal, Pawan and Benner, Peter},
  journal={arXiv preprint arXiv:2309.07193},
  year={2023}
}

@article{zhang2019convergence,
  title={On the convergence of the {SINDy} algorithm},
  author={Zhang, Linan and Schaeffer, Hayden},
  journal={Multiscale Modeling \& Simulation},
  volume={17},
  number={3},
  pages={948--972},
  year={2019},
  publisher={SIAM}
}

@inproceedings{bhatt2023sindy,
  title={{SINDy-CRN: Sparse Identification of Chemical Reaction Networks from Data}},
  author={Bhatt, Nirav and Jayawardhana, Bayu and Plaza, Santiago S{\'a}nchez-Escalonilla},
  booktitle={2023 62nd IEEE Conference on Decision and Control (CDC)},
  pages={3512--3518},
  year={2023},
  organization={IEEE}
}

@article{burnham2007identifying,
  title={Identifying chemical reaction network models},
  author={Burnham, SC and Willis, MJ and Wright, AR},
  journal={IFAC Proceedings Volumes},
  volume={40},
  number={5},
  pages={225--230},
  year={2007},
  publisher={Elsevier}
}

@article{mangan2016inferring,
  title={Inferring biological networks by sparse identification of nonlinear dynamics},
  author={Mangan, Niall M and Brunton, Steven L and Proctor, Joshua L and Kutz, J Nathan},
  journal={IEEE Transactions on Molecular, Biological, and Multi-Scale Communications},
  volume={2},
  number={1},
  pages={52--63},
  year={2016},
  publisher={IEEE}
}

@article{szederkenyi2011inference,
  title={Inference of complex biological networks: distinguishability issues and optimization-based solutions},
  author={Szederk{\'e}nyi, G{\'a}bor and Banga, Julio R and Alonso, Antonio A},
  journal={BMC Systems Biology},
  volume={5},
  pages={1--15},
  year={2011},
  publisher={Springer}
}

@article{szederkenyi2010computing,
  title={Computing sparse and dense realizations of reaction kinetic systems},
  author={Szederk{\'e}nyi, Gabor},
  journal={Journal of Mathematical Chemistry},
  volume={47},
  number={2},
  pages={551--568},
  year={2010},
  publisher={Springer}
}

@book{feinberg2019foundations,
    author = {Feinberg, Martin},
    title = {Foundations of Chemical Reaction Network Theory},
    publisher = {Springer},
    year = {2019}
}

@article{HALL1976105,
title = {Optimal error bounds for cubic spline interpolation},
journal = {Journal of Approximation Theory},
volume = {16},
number = {2},
pages = {105-122},
year = {1976},
issn = {0021-9045},
author = {Charles A Hall and W.Weston Meyer}
}

@article{396bf6e1-ef54-3bf6-a49b-862db8404076,
 ISSN = {00361445, 10957200},
 URL = {http://www.jstor.org/stable/2030248},
 author = {G. W. Stewart},
 journal = {SIAM Review},
 number = {4},
 pages = {634--662},
 publisher = {Society for Industrial and Applied Mathematics},
 title = {On the Perturbation of Pseudo-Inverses, Projections and Linear Least Squares Problems},
 urldate = {2025-07-03},
 volume = {19},
 year = {1977}
}

@article{bures2023organic,
  title={Organic reaction mechanism classification using machine learning},
  author={Bur{\'e}s, Jordi and Larrosa, Igor},
  journal={Nature},
  volume={613},
  number={7945},
  pages={689--695},
  year={2023},
  publisher={Nature Publishing Group UK London}
}

@article{wei2022sparse,
  title={Sparse dynamical system identification with simultaneous structural parameters and initial condition estimation},
  author={Wei, Baolei},
  journal={Chaos, Solitons \& Fractals},
  volume={165},
  pages={112866},
  year={2022},
  publisher={Elsevier}
}

@article{hoffmann2019reactive,
  title={{Reactive SINDy: Discovering governing reactions from concentration data}},
  author={Hoffmann, Moritz and Fr{\"o}hner, Christoph and No{\'e}, Frank},
  journal={Journal of Chemical Physics},
  volume={150},
  number={2},
  year={2019},
  publisher={AIP Publishing}
}

@article{searson2007inference,
  title={Inference of chemical reaction networks using hybrid s-system models},
  author={Searson, Dominic P and Willis, Mark J and Horne, Simon J and Wright, Allen R},
  journal={Chemical Product and Process Modeling},
  volume={2},
  number={1},
  year={2007},
  publisher={De Gruyter}
}

@article{willis2016inference,
  title={Inference of chemical reaction networks using mixed integer linear programming},
  author={Willis, Mark J and von Stosch, Moritz},
  journal={Computers \& Chemical Engineering},
  volume={90},
  pages={31--43},
  year={2016},
  publisher={Elsevier}
}

@book{steinfeld1999chemical,
  title={Chemical Kinetics and Dynamics},
  author={Steinfeld, Jeffrey I and Francisco, Joseph Salvadore and Hase, William L},
  volume={2},
  year={1999},
  publisher={Prentice Hall Upper Saddle River, NJ}
}

@book{upadhyay2006chemical,
  title={Chemical Kinetics and Reaction Dynamics},
  author={Upadhyay, Santosh K},
  year={2006},
  publisher={Springer}
}

@article{zhang2019learning,
  title={Learning chemical reaction networks from trajectory data},
  author={Zhang, Wei and Klus, Stefan and Conrad, Tim and Sch{\"u}tte, Christof},
  journal={SIAM Journal on Applied Dynamical Systems},
  volume={18},
  number={4},
  pages={2000--2046},
  year={2019},
  publisher={SIAM}
}

\appendix

\section*{Appendix}

\begin{proof}[Proof of Theorem~\ref{thm:1}]
    \noindent \textbf{Differentiation error.}
    Let \( \boldsymbol{\Xi} = [\xi_{\alpha,i}] \) be the noise matrix, then
    \[
        \bar{\mathbf{X}} = \mathbf{X} + \boldsymbol{\Xi},
    \]
    and
    \begin{align*}
        \mathbf{E}_{\mathrm{dif}} = \dot{\mathbf{X}} - \bar{\mathbf{X}} \mathbf{L} = \underbrace{\dot{\mathbf{X}} - \mathbf{X} \mathbf{L}}_{\text{spline interpolation error}}  -\underbrace{\boldsymbol{\Xi}\mathbf{L}}_{\text{noise propagation}}
    \end{align*}
    The first term is bounded using classical cubic spline approximation theory \cite{HALL1976105}:
    \[
        \left| (\dot{\mathbf{X}} - \mathbf{X} \mathbf{L})_{\alpha,i} \right| \leq \frac{9 + \sqrt{3}}{216} \max_{t \in [t_0, t_n]} \left| x_\alpha^{(4)}(t) \right| h^3  = \frac{\kappa_{\mathrm{dif}}}{n^3}.
    \]
    For the noise propagation term, we have
    \[
        |(\boldsymbol{\Xi}\mathbf{L})_{\alpha,i}|
        = \left| \sum_{j=0}^{n} \xi_{\alpha,j} L_{j,i} \right|
        \le \varepsilon \sum_{j=0}^{n} |L_{j,i}|
        = \varepsilon \|\mathbf{L}_{:,i}\|_1.
    \]
    Combining both bounds yields
    \[
        \left|(\mathbf{E}_{\mathrm{dif}})_{\alpha,i}\right|
        \le \frac{\kappa_{\mathrm{dif}}}{n^3}
        + \varepsilon \|\mathbf{L}_{:,i}\|_1.
    \]

    \noindent\textbf{Integration error.}
    Let $\boldsymbol{\Delta}_\xi = \bar{\mathbf{D}} - \mathbf{D}$ denote the noise
    perturbation of the polynomial dictionary.  
    For a degree-$p$ monomial $d_{\beta,i} = \prod_{\alpha \in S_\beta} x_{\alpha,i}^{m_\alpha}$, 
    its noisy version is
    \[
        \bar{d}_{\beta,i} = \prod_{\alpha \in S_\beta} (x_{\alpha,i} + \xi_{\alpha,i})^{m_\alpha}.
    \]
    A first-order Taylor expansion in the $\xi_{\alpha,i}$ gives
    \[
        \bar{d}_{\beta,i} = d_{\beta,i} + \sum_{\alpha \in S_\beta} c_{\beta,\alpha,i} \, \xi_{\alpha,i} + \mathcal{O}(\varepsilon^2),
    \]
    where the coefficients $c_{\beta,\alpha,i}$ depend on the noiseless data, and are themselves polynomials of degree at most $p-1$ in the features $x_{\alpha,i}$.
    Hence, each entry of the dictionary perturbation satisfies
    \[
        |(\boldsymbol{\Delta}_\xi)_{\beta,i}| \le \varepsilon \sum_{\alpha \in S_\beta} |c_{\beta,\alpha,i}| = \varepsilon \ C_\beta, 
    \]
    where $C_\beta$ depends on the noiseless data. 
    Then, decomposing the integration error
    \[
        \mathbf{E}_{\mathrm{int}} = \int_{t_0}^{t} \mathbf{D}(s)\,ds - \bar{\mathbf{D}} \mathbf{J}
        = \underbrace{\int_{t_0}^{t} \mathbf{D}(s)\,ds - \mathbf{D}\mathbf{J}}_{\text{spline integration error}}
        - \underbrace{\boldsymbol{\Delta}_\xi \mathbf{J}}_{\text{noise propagation}},
    \]
    and using classical cubic spline approximation theory \cite{HALL1976105}, we obtain the entry-wise bound
    \[
        |(\mathbf{E}_{\mathrm{int}})_{\beta,i}| \le \frac{\kappa_{\mathrm{int}}}{n^4} + \varepsilon C_\beta \, \|\mathbf{J}_{:,i}\|_1.
    \]
\end{proof}

\begin{proof}[Proof of Theorem~\ref{cor:1}]
For uniformly spaced knots, each not-a-knot cubic spline $s_i$ satisfies:
\begin{itemize}
    \item $s_i$ is supported on at most 4 consecutive intervals $[t_{i-2},t_{i+2}]$,
    \item $\|s_i\|_\infty = \mathcal{O}(1)$ uniformly in $i$,
    \item $\int_{t_0}^{t_n} |s_i(t)| dt = \mathcal{O}(h)$,
    \item $\|s_i'\|_\infty = \mathcal{O}(h^{-1})$.
\end{itemize}

\textbf{Differentiation matrix $\mathbf{L}$.}
\emph{Row norms:} $\mathbf{L}_{i,:} = [s_i'(t_0),\dots,s_i'(t_n)]$. Each spline is nonzero at at most 4 knots, each derivative contributes $O(h^{-1})$, so
    \[
    \|\mathbf{L}_{i,:}\|_1 = \sum_{k=0}^n |s_i'(t_k)| = \mathcal{O}(4h^{-1}) = \mathcal{O}(h^{-1}) = \mathcal{O}(n).
    \]
\emph{Column norms:} $\mathbf{L}_{:,k} = [s_0'(t_k),\dots,s_n'(t_k)]^T$. Only ~4 splines are nonzero at a given knot, each one contributes $O(h^{-1})$ to the sum, so
    \[
        \|\mathbf{L}_{:,k}\|_1 = O(h^{-1}) = O(n).
    \]

\textbf{Integration matrix $\mathbf{J}$.}
\emph{Row norms:} $\mathbf{J}_{i,:} = \Big[\int_{t_0}^{t_0}s_i, \dots, \int_{t_0}^{t_n}s_i\Big]$. 
    The integral grows only over the support of $s_i$, which has total length $O(h)$, then remains constant. Summing over $n+1$ knots gives
    \[
    \|\mathbf{J}_{i,:}\|_1 = \sum_{k=0}^n \Big|\int_{t_0}^{t_k} s_i(t) dt\Big| = O(n h) = O(t_n-t_0).
    \]
\emph{Column norms:} $\mathbf{J}_{:,k} = \Big[\int_{t_0}^{t_k}s_0, \dots, \int_{t_0}^{t_k}s_n\Big]^T$. At a given $t_k$, roughly $k$ splines contribute, each $O(h)$, so
    \[
    \|\mathbf{J}_{:,k}\|_1 = O(k h) \le O(n h) = O(t_n-t_0).
    \]
All bounds are uniform in $i$, which completes the proof.
\end{proof}

\end{document}